\newtheorem{thm}{Theorem}[section]
\newtheorem{cor}[thm]{Corollary}
\newtheorem{lem}[thm]{Lemma}
\newtheorem{pro}[thm]{Proposition}
\newtheorem{rem}{Remark}
\journal{~}
\begin{document}
\begin{spacing}{1.15}
\begin{CJK*}{GBK}{song}
\begin{frontmatter}
\title{\textbf{Spherical two-distance sets and graph eigenvalues}}

\author{Jiang Zhou}\ead{zhoujiang@hrbeu.edu.cn}

\address{College of Mathematical Sciences, Harbin Engineering University, Harbin 150001, PR China}

\begin{abstract}
A set of unit vectors in $\mathbb{R}^d$ is a called a spherical two-distance set if the inner products of distinct vectors only take two values. In this paper, we give explicit correspondence between spherical two-distance sets and graphs with specific spectral properties, and derive some bounds on the maximum size of spherical two-distance sets.
\end{abstract}

\begin{keyword}
Spherical two-distance set, Spectral graph theory, Graph eigenvalue
\\
\emph{AMS classification (2020):} 52C35, 05C50
\end{keyword}
\end{frontmatter}

\section{Introduction}
A set of unit vectors in $\mathbb{R}^d$ is a called a \textit{spherical two-distance set} if the inner products of distinct vectors only take two values \cite{Glazyrin,Jiang2,Musin}. The problem of determining the maximum size of spherical two-distance sets is an important topic in discrete geometry. Delsarte, Goethals and Seidel \cite{Delsarte} proved that the maximum size of spherical two-distance sets in $\mathbb{R}^d$ does not exceed $\frac{1}{2}d(d+3)$. In \cite{Glazyrin}, Glazyrin and Yu showed that the maximum size of spherical two-distance sets in $\mathbb{R}^d$ is $\frac{1}{2}d(d+1)$ when $d+3\geq10$ is not an odd perfect square.

For $-1\leq\beta<\alpha<1$, a \textit{spherical $\{\alpha,\beta\}$-code} $S$ is a spherical two-distance set such that $x^\top y\in\{\alpha,\beta\}$ for any two distinct $x,y\in S$. Let $N_{\alpha,\beta}(d)$ denote the maximum size of a spherical $\{\alpha,\beta\}$-code in $\mathbb{R}^d$. A spherical $\{\alpha,-\alpha\}$-code is a set of \textit{equiangular lines} with pairwise angle $\arccos\alpha$. From some recent work \cite{Balla,Bukh,Jiang2}, we know that there exists constant $c_{\alpha,\beta}$ such that $N_{\alpha,\beta}(d)\leq c_{\alpha,\beta}d$ for fixed $-1\leq\beta<0\leq\alpha<1$ and sufficiently large $d$. By using the spectral method, Jiang et al. \cite{Jiang1} determined $N_{\alpha,-\alpha}(d)$ for fixed $\alpha$ and sufficiently large $d$.

We will investigate spherical $\{\alpha,\beta\}$-code without requiring fixed $\alpha$ and $\beta$. The rest of this paper is organized as follows. In Section 2, we introduce some notations and auxiliary lemmas. In Sections 3 and 4, we give explicit correspondence between spherical two-distance sets and graphs. In Section 5, some bounds on $N_{\alpha,\beta}(d)$ are obtained by considering the structural and spectral properties of graphs associated with spherical two-distance sets.

\section{Preliminaries}
Let $V(G)$ and $E(G)$ denote the vertex set and the edge set of a graph $G$, respectively. For an $n$-vertex graph $G$, the \textit{adjacency matrix} $A_G$ of $G$ is an $n\times n$ symmetric matrix with entries
\begin{eqnarray*}
(A_G)_{ij}=\begin{cases}1~~~~~~\mbox{if}~\{i,j\}\in E(G),\\
0~~~~~~\mbox{if}~\{i,j\}\notin E(G).\end{cases}
\end{eqnarray*}
Eigenvalues of $A_G$ are called \textit{eigenvalues} of $G$. Let $\lambda_1(G)\geq\cdots\geq\lambda_n(G)$ denote the eigenvalues of $G$, that is, $\lambda_j(G)$ is the $j$-th largest eigenvalue of $G$. For an eigenvalue $\lambda$ of $G$ with multiplicity $m$, the \textit{codimension} of $\lambda$ in $G$ is defined as $n-m={\rm rank}(\lambda I-A_G)$.
\begin{lem}\label{lem2.1}
\textup{\cite{Cvetkovic1}} Let $G$ be an $n$-vertex connected graph. Then
\begin{eqnarray*}
\lambda_n(G)\geq-\sqrt{\left\lfloor\frac{n}{2}\right\rfloor\left\lceil\frac{n}{2}\right\rceil}.
\end{eqnarray*}
\end{lem}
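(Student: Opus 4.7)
The plan is to bound $-\lambda_n(G)$ via a sign-based bipartition of a bottom eigenvector and comparison with a complete bipartite graph $K_{m,k}$, which has spectral radius $\sqrt{mk}$.

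First I would take a unit eigenvector $x \in \mathbb{R}^n$ realizing $\lambda_n(G) = x^\top A_G x = 2\sum_{\{i,j\} \in E(G)} x_i x_j$. I would partition $V(G) = V_+ \cup V_- \cup V_0$ according to the sign of $x_i$, set $m = |V_+|$ and $k = |V_-|$, and write $y_i = |x_i|$, so that $\|y\|_2 = 1$.

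Next I would drop the non-negative contributions to $x^\top A_G x$. Each edge inside $V_+$ or inside $V_-$ gives a non-negative summand, and each edge incident to $V_0$ contributes zero, so
\[
\lambda_n(G) \;\geq\; 2 \sum_{\substack{\{i,j\}\in E(G)\\ i\in V_+,\, j\in V_-}} x_i x_j \;=\; -2 \sum_{\substack{\{i,j\}\in E(G)\\ i\in V_+,\, j\in V_-}} y_i y_j \;\geq\; -2 \sum_{i\in V_+}\sum_{j\in V_-} y_i y_j \;=\; -2ab,
\]
where $a = \sum_{i \in V_+} y_i$ and $b = \sum_{j \in V_-} y_j$. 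Cauchy--Schwarz gives $a^2 \leq m \sum_{V_+} y_i^2$ and $b^2 \leq k \sum_{V_-} y_j^2$, hence $a^2/m + b^2/k \leq \|y\|_2^2 = 1$; applying AM--GM to these two terms yields $2ab \leq \sqrt{mk}$. Finally, $m + k \leq n$, and the function $(m,k) \mapsto \sqrt{mk}$ is maximized over non-negative integer pairs with $m + k \leq n$ at $\{m,k\} = \{\lfloor n/2 \rfloor, \lceil n/2 \rceil\}$, giving $\lambda_n(G) \geq -\sqrt{\lfloor n/2 \rfloor \lceil n/2 \rceil}$.

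The main subtlety I anticipate is cleanly handling the case $V_0 \neq \emptyset$, but this is absorbed by the Cauchy--Schwarz step, which yields an inequality $a^2/m + b^2/k \leq 1$ rather than an equality. Connectivity of $G$ is not strictly needed for the bound itself; it is relevant only for the equality analysis, where the extremal configuration is $G = K_{\lfloor n/2 \rfloor, \lceil n/2 \rceil}$, which is indeed connected.
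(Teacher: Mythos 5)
The paper does not prove this lemma; it imports it verbatim from the cited reference \cite{Cvetkovic1}, so there is no in-paper argument to compare against. Your proof is correct and is essentially the classical one: restrict the Rayleigh quotient of a bottom eigenvector to the cross terms between the positive and negative supports, dominate by the complete bipartite quadratic form, and optimize $\sqrt{mk}$ subject to $m+k\leq n$. Every step checks out, including the observation that $V_0$ is absorbed because Cauchy--Schwarz only gives $a^2/m+b^2/k\leq 1$, and the remark that connectivity is irrelevant to the inequality itself. The only pedantic gap is the degenerate case $m=0$ or $k=0$, where the division in the Cauchy--Schwarz step is undefined; but then there are no cross edges, so $\lambda_n(G)\geq 0$ and the bound holds trivially --- one sentence would close this.
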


Let $R(M)=\{x:x=My,y\in\mathbb{R}^n\}$ denote the range (column space) of an $n\times n$ real matrix $M$. For a real symmetric matrix, let ${\rm in}_+(M)$ and ${\rm in}_-(M)$ denote the number of positive and negative eigenvalues of $M$, respectively.
\begin{lem}\label{lem2.4}
\textup{\cite[Lemma 4]{Gregory}} Let $M$ be an $n\times n$ real symmetric matrix, and let $u\in\mathbb{R}^n$. For $c<0$, the following hold:\\
(1) If $u\notin R(M)$, then
\begin{eqnarray*}
{\rm in}_+(M+cuu^\top)={\rm in}_+(M),~{\rm in}_-(M+cuu^\top)={\rm in}_-(M)+1.
\end{eqnarray*}
(2) If $u=Mx$ for some $x\in\mathbb{R}^n$ and $cx^\top u>-1$, then
\begin{eqnarray*}
{\rm in}_+(M+cuu^\top)={\rm in}_+(M),~{\rm in}_-(M+cuu^\top)={\rm in}_-(M).
\end{eqnarray*}
(3) If $u=Mx$ for some $x\in\mathbb{R}^n$ and $cx^\top u<-1$, then
\begin{eqnarray*}
{\rm in}_+(M+cuu^\top)={\rm in}_+(M)-1,~{\rm in}_-(M+cuu^\top)={\rm in}_-(M)+1.
\end{eqnarray*}
(4) If $u=Mx$ for some $x\in\mathbb{R}^n$ and $cx^\top u=-1$, then
\begin{eqnarray*}
{\rm in}_+(M+cuu^\top)={\rm in}_+(M)-1,~{\rm in}_-(M+cuu^\top)={\rm in}_-(M).
\end{eqnarray*}
\end{lem}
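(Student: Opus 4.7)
The plan is to reduce all four statements to the inertia of a single $(n{+}1)\times(n{+}1)$ bordered matrix
\[
N \;=\; \begin{pmatrix} M & u \\ u^\top & -1/c \end{pmatrix},
\]
by computing its inertia in two different ways via congruence and Haynsworth's additivity. The point is that Schur-complementing $N$ with respect to the scalar block $-1/c>0$ yields $M-u(-c)u^\top\cdot\!(-1/c)\cdot\ldots = M+cuu^\top$ (after a correct sign bookkeeping), so
\[
\operatorname{in}_+(N)=\operatorname{in}_+(M+cuu^\top)+1,\qquad \operatorname{in}_-(N)=\operatorname{in}_-(M+cuu^\top).
\]
Thus it suffices to determine $\operatorname{in}_\pm(N)$ in each of the four cases; the conclusion of the lemma is then just subtraction.

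For cases (2)--(4), where $u=Mx$, I would apply the congruence
\[
\begin{pmatrix} I & 0 \\ -x^\top & 1 \end{pmatrix} N \begin{pmatrix} I & -x \\ 0 & 1 \end{pmatrix}
\;=\; \begin{pmatrix} M & 0 \\ 0 & -\tfrac{1}{c}-x^\top u \end{pmatrix},
\]
using $Mx=u$ to kill both off-diagonal blocks. Since $c<0$, the sign of the scalar $-\tfrac{1}{c}-x^\top u$ is positive, negative, or zero exactly according as $cx^\top u>-1$, $cx^\top u<-1$, or $cx^\top u=-1$. Reading off $\operatorname{in}_\pm(N)=\operatorname{in}_\pm(M)+\text{(contribution of the scalar)}$ and substituting in the Schur identity gives (2), (3), (4) at once.

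The main obstacle is case (1), where $u\notin R(M)$ and no $x$ with $u=Mx$ exists, so the above congruence is unavailable. To handle it, I would use the symmetric decomposition $\mathbb{R}^n=R(M)\oplus\ker(M)$ and write $u=u_1+u_2$ with $u_1=Mx_1\in R(M)$ and $0\neq u_2\in\ker(M)$. Performing the analogous congruence with $x_1$ in place of $x$ reduces $N$ to
\[
\begin{pmatrix} M & u_2 \\ u_2^\top & -\tfrac{1}{c}-x_1^\top u \end{pmatrix}.
\]
Then I would change basis by an orthogonal matrix whose first column is $u_2/\|u_2\|$; because $Mu_2=0$ and $M$ is symmetric, $M$ decouples as $0\oplus M'$ in this basis, and the remaining matrix splits as a direct sum of $M'$ and a $2\times2$ block $\bigl(\begin{smallmatrix}0 & \|u_2\|\\ \|u_2\| & \ast\end{smallmatrix}\bigr)$. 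This $2\times2$ block has negative determinant and hence inertia $(1,1)$, independently of its $(2,2)$ entry. Consequently $\operatorname{in}_+(N)=\operatorname{in}_+(M)+1$ and $\operatorname{in}_-(N)=\operatorname{in}_-(M)+1$, which combined with the Schur identity gives case (1). The whole proof therefore amounts to two congruences plus an inertia-additivity argument; the only subtlety is isolating the null-space component of $u$ in case (1).
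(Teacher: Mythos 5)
Your argument is correct and complete. Note, however, that the paper offers no proof of this lemma at all: it is quoted verbatim from \cite[Lemma 4]{Gregory}, so there is no internal argument to compare yours against. Your proof is the standard (and, as far as I can tell, essentially the original) route: border $M$ by $u$ and the scalar $-1/c>0$, equate the inertia of the bordered matrix computed via the Schur complement of the scalar block (Haynsworth additivity, giving $\operatorname{in}_+(N)=\operatorname{in}_+(M+cuu^\top)+1$, $\operatorname{in}_-(N)=\operatorname{in}_-(M+cuu^\top)$) with the inertia computed via the congruence that clears the border using $u=Mx$. The sign analysis of the residual scalar $-1/c-x^\top u$ correctly reproduces cases (2)--(4), and your treatment of case (1) --- splitting $u=Mx_1+u_2$ with $0\neq u_2\in\ker(M)=R(M)^\perp$ and extracting the indefinite $2\times 2$ block $\bigl(\begin{smallmatrix}0&\|u_2\|\\ \|u_2\|&\ast\end{smallmatrix}\bigr)$ of inertia $(1,1)$ --- is exactly the right way to handle the absence of a preimage. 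The only implicit inputs are Sylvester's law of inertia and Haynsworth's additivity for a nonsingular (here scalar) block, both standard; if you wanted to be fully self-contained you could replace the Haynsworth step by one more explicit congruence clearing the last row and column against the invertible entry $-1/c$.
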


\begin{lem}\label{lem2.5}
\textup{\cite[Lemma 4]{Gregory}} Let $M$ be an $n\times n$ real symmetric matrix, and let $u\in\mathbb{R}^n$. For $c>0$, the following hold:\\
(1) If $u\notin R(M)$, then
\begin{eqnarray*}
{\rm in}_+(M+cuu^\top)={\rm in}_+(M)+1,~{\rm in}_-(M+cuu^\top)={\rm in}_-(M).
\end{eqnarray*}
(2) If $u=Mx$ for some $x\in\mathbb{R}^n$ and $cx^\top u>-1$, then
\begin{eqnarray*}
{\rm in}_+(M+cuu^\top)={\rm in}_+(M),~{\rm in}_-(M+cuu^\top)={\rm in}_-(M).
\end{eqnarray*}
(3) If $u=Mx$ for some $x\in\mathbb{R}^n$ and $cx^\top u<-1$, then
\begin{eqnarray*}
{\rm in}_+(M+cuu^\top)={\rm in}_+(M)+1,~{\rm in}_-(M+cuu^\top)={\rm in}_-(M)-1.
\end{eqnarray*}
(4) If $u=Mx$ for some $x\in\mathbb{R}^n$ and $cx^\top u=-1$, then
\begin{eqnarray*}
{\rm in}_+(M+cuu^\top)={\rm in}_+(M),~{\rm in}_-(M+cuu^\top)={\rm in}_-(M)-1.
\end{eqnarray*}
\end{lem}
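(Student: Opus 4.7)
The plan is to derive Lemma~\ref{lem2.5} from Lemma~\ref{lem2.4} by a sign-flip reduction $M \mapsto -M$, $c \mapsto -c$. The key observation is that negating a real symmetric matrix interchanges its positive and negative inertias, so $\mathrm{in}_\pm(-A) = \mathrm{in}_\mp(A)$ for any symmetric $A$. In particular, since $-(M+cuu^\top) = (-M) + (-c)uu^\top$ and $-c < 0$, the perturbed matrix appearing in Lemma~\ref{lem2.5} is, up to a global sign, exactly an instance of the situation covered by Lemma~\ref{lem2.4}.

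Before invoking Lemma~\ref{lem2.4} on the triple $(M',c',u) := (-M,-c,u)$, I would confirm that the hypotheses of each case translate cleanly. The range is preserved, $R(-M) = R(M)$, so case~(1)'s hypothesis $u \notin R(M)$ is unchanged. In the remaining cases, $u = Mx$ if and only if $u = M'(-x)$, so setting $x' := -x$ one computes $c'(x')^\top u = (-c)(-x)^\top u = cx^\top u$; the two sign changes cancel and the trichotomy $cx^\top u \gtrless -1$ is therefore preserved verbatim under the substitution.

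With the hypotheses aligned, I would apply Lemma~\ref{lem2.4} to $M'+c'uu^\top$ in each of its four cases and read off the conclusions for $M+cuu^\top$ by swapping $\mathrm{in}_+$ and $\mathrm{in}_-$ throughout, using $\mathrm{in}_\pm(M+cuu^\top) = \mathrm{in}_\mp(M'+c'uu^\top)$ and $\mathrm{in}_\pm(M) = \mathrm{in}_\mp(M')$. For instance, Lemma~\ref{lem2.4}(1) asserts that $\mathrm{in}_+$ stays put while $\mathrm{in}_-$ increases by $1$; after the swap this becomes $\mathrm{in}_-$ staying put while $\mathrm{in}_+$ increases by $1$, which is exactly Lemma~\ref{lem2.5}(1). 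The other three cases follow by the same mechanical translation.

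The only real pitfall is bookkeeping the sign in the product $cx^\top u$: one must verify that the substitution preserves rather than reverses the inequality separating the three nondegenerate regimes and the boundary case. Once the cancellation $(-c)(-x)^\top u = cx^\top u$ is checked, there is no further analytical content, and the lemma reduces to a direct corollary of Lemma~\ref{lem2.4}.
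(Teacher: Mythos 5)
Your reduction is correct. The paper itself gives no proof of this lemma: both Lemma~\ref{lem2.4} and Lemma~\ref{lem2.5} are quoted from the same source (Lemma~4 of Gregory, Heyink and Vander Meulen), so there is no in-paper argument to compare against. Your derivation of the $c>0$ case from the $c<0$ case via $M\mapsto -M$, $c\mapsto -c$ is sound: $\mathrm{in}_\pm(-A)=\mathrm{in}_\mp(A)$, $R(-M)=R(M)$, and with $x'=-x$ the quantity $c'(x')^\top u=(-c)(-x)^\top u=cx^\top u$ is unchanged, so each hypothesis of Lemma~\ref{lem2.4} maps to the corresponding hypothesis here and the conclusions transfer by swapping $\mathrm{in}_+$ and $\mathrm{in}_-$; I checked all four cases and the increments come out exactly as stated. (One point you leave implicit, though it is already implicit in the statements of both lemmas: the value $x^\top u$ does not depend on the choice of $x$ with $Mx=u$, since any two such $x$ differ by an element of $\ker M$ orthogonal to $R(M)\ni u$.) The only caveat is that your argument presupposes Lemma~\ref{lem2.4}, which the paper likewise only cites; as a self-contained proof of the pair it is incomplete, but as a demonstration that the two lemmas are equivalent under negation it is complete and correct.
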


%The \textit{independence number} (\textit{clique number}) of a graph $G$ is the maximum size of independent sets (cliques) in $G$. The following is an upper bound on the independence number involving the group inverse of a graph matrix.
%\begin{lem}\label{lem2.6}
%\textup{\cite[Theorem 3.1]{Zhou}} Let $G$ be an $n$-vertex graph with independence number $t$, and let $M$ be a positive semidefinite $n\times n$ matrix indexed by vertices of $G$ such that $(M)_{ij}=0$ if $i,j$ are two nonadjacent distinct vertices. If $x=(x_1,\ldots,x_n)^\top\in R(M)$ and $x_i\neq0$ for $i=1,\ldots,n$, then
%\begin{eqnarray*}
%t\leq x^\top M^\#x\max_{u\in V(G)}\frac{(M)_{uu}}{x_u^2}.
%\end{eqnarray*}
%\end{lem}

%\begin{lem}
%Let $G$ be an $n$-vertex connected graph.\\
%(1) If $G$ is bipartite with a bipartition $V(G)=V_1\cup V_2$, then ${\rm j}^\top(A_{l(G)}+2I)^\#{\rm j}=\frac{n_1n_2}{n_1+n_2}$.\\
%(2) If $G$ is non-bipartite, then ${\rm j}^\top(A_{l(G)}+2I)^\#{\rm j}=\frac{n}{4}$.
%\end{lem}

\section{The associated $\alpha$-graphs of spherical two-distance sets}
The \textit{$\{1\}$-inverse} of a matrix $A$, denoted by $A^{(1)}$, is a matrix $X$ such that $AXA=A$. For a square matrix $M$, the \textit{group inverse} of $M$, denoted by $M^\#$, is the matrix $X$ such that $MXM=M,~XMX=X$ and $MX=XM$. It is known \cite{Ben-Israel} that $M^\#$ exists if and only if $\mbox{\rm rank}(M)=\mbox{\rm rank}(M^2)$. If $M^\#$ exists, then $M^\#$ is unique.

Let ${\rm j}$ and $J$ denote the all-ones column vector and the all-ones matrix, respectively. Let $S=\{u_1,\ldots,u_n\}$ be a spherical $\{\alpha,\beta\}$-code ($-1\leq\beta<\alpha<1$). The \textit{rank} of $S$ is defined as the dimension of the space spanned by $S$, which equals to the rank of the Gram matrix of $S$. Let $\Gamma_\alpha(S)$ denote the \textit{associated $\alpha$-graph} with vertex set $S$ and edge set $E(\Gamma_\alpha(S))=\{u_iu_j:u_i^\top u_j=\alpha\}$. We give the correspondence between spherical two-distance sets and $\alpha$-graphs as follows.
\begin{thm}\label{thm3.1}
Suppose that $-1\leq\beta<\alpha<1$, $\beta<0$, and take $\mu=\frac{1-\beta}{\alpha-\beta}$. If $S$ is a spherical $\{\alpha,\beta\}$-code with size $n$ and rank $r$, then the following hold:\\
(1) $\lambda_n(\Gamma_\alpha(S))\geq-\mu$.\\
(2) Let $A$ be the adjacency matrix of $\Gamma_\alpha(S)$, then
\begin{eqnarray*}
{\rm j}\in R(A+\mu I)&,&{\rm j}^\top(A+\mu I)^\#{\rm j}\leq\frac{\alpha-\beta}{-\beta},\\
{\rm rank}(A+\mu I)&=&\begin{cases}r+1~~~~\mbox{if}~~{\rm j}^\top(A+\mu I)^\#{\rm j}=\frac{\alpha-\beta}{-\beta},\\
r~~~~~~~~~\mbox{otherwise}.\end{cases}
\end{eqnarray*}
Conversely, if an $n$-vertex graph $G$ satisfies (1) and (2), then there exists a spherical $\{\alpha,\beta\}$-code $S$ with rank $r$ such that $\Gamma_\alpha(S)=G$.
\end{thm}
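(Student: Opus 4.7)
The plan is to encode everything into a rank-one perturbation of $A+\mu I$ and read off the conclusion from Lemma~\ref{lem2.4}. First I would write out the Gram matrix of $S$: since $u_i^\top u_j=\alpha$ on edges of $\Gamma_\alpha(S)$, $u_i^\top u_j=\beta$ on non-edges, and $u_i^\top u_i=1$, it equals
\begin{eqnarray*}
G = I + \alpha A + \beta(J-I-A) = (\alpha-\beta)(A+\mu I) + \beta\,{\rm j}{\rm j}^\top,
\end{eqnarray*}
where $A$ is the adjacency matrix of $\Gamma_\alpha(S)$. Setting $M=A+\mu I$ and $c=\beta/(\alpha-\beta)<0$, this rearranges to $G=(\alpha-\beta)(M+c\,{\rm j}{\rm j}^\top)$. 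Since $S$ has rank $r$, $G$ is positive semidefinite of rank $r$, so $M+c\,{\rm j}{\rm j}^\top$ has ${\rm in}_+=r$ and ${\rm in}_-=0$. Note that $M$ is symmetric, hence $M^\#$ exists automatically.

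For the forward direction, I would apply Lemma~\ref{lem2.4} and rule out the cases inconsistent with these inertia values. Case~(1) would force ${\rm in}_-(M+c\,{\rm j}{\rm j}^\top)\geq 1$, contradicting $G\succeq 0$; hence ${\rm j}\in R(M)$. Taking $x=M^\#{\rm j}$ and $s={\rm j}^\top M^\#{\rm j}=x^\top {\rm j}$, Case~(3) ($cs<-1$) is ruled out by the same inertia count. The only surviving possibilities are Case~(2) with $cs>-1$, giving ${\rm in}_-(M)=0$ and $\mathrm{rank}(M)=r$, and Case~(4) with $cs=-1$, giving ${\rm in}_-(M)=0$ and $\mathrm{rank}(M)=r+1$. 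In either case all eigenvalues of $A$ are at least $-\mu$, which is (1); and dividing $cs\geq-1$ by the negative number $c$ flips the inequality to $s\leq(\alpha-\beta)/(-\beta)$, with equality precisely in Case~(4), matching (2).

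The converse is obtained by running Lemma~\ref{lem2.4} forward. Given a graph $G$ on $n$ vertices satisfying (1) and (2), set $M=A+\mu I$ (positive semidefinite by (1)) and
\begin{eqnarray*}
G_0 := (\alpha-\beta)(M+c\,{\rm j}{\rm j}^\top) = (1-\beta)I + (\alpha-\beta)A + \beta J.
\end{eqnarray*}
Condition (2) places us in either Case~(2) (when $s<(\alpha-\beta)/(-\beta)$ and $\mathrm{rank}(M)=r$) or Case~(4) (when $s=(\alpha-\beta)/(-\beta)$ and $\mathrm{rank}(M)=r+1$), and in both subcases Lemma~\ref{lem2.4} yields that $G_0$ is positive semidefinite of rank exactly $r$. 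Hence $G_0$ is the Gram matrix of some vectors $u_1,\ldots,u_n\in\mathbb{R}^r$; a direct computation gives $(G_0)_{ii}=(1-\beta)+\beta=1$, while the off-diagonal entries are $\alpha$ on edges and $\beta$ on non-edges of $G$, so $S=\{u_1,\ldots,u_n\}$ is the required spherical $\{\alpha,\beta\}$-code with $\Gamma_\alpha(S)=G$.

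The main technical difficulty I anticipate is the bookkeeping: the four cases of Lemma~\ref{lem2.4} must be matched against the two inertia constraints ${\rm in}_+=r,~{\rm in}_-=0$ coming from $G\succeq 0$, and one must carefully track the sign flip when dividing $cs\geq-1$ by the negative $c$ to recover the stated upper bound on ${\rm j}^\top(A+\mu I)^\#{\rm j}$. Once this identification is set up, both implications follow from essentially the same inertia computation.
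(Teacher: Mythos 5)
Your proposal is correct and follows essentially the same route as the paper: the same decomposition of the Gram matrix as $(\alpha-\beta)\bigl(A+\mu I+\frac{\beta}{\alpha-\beta}{\rm j}{\rm j}^\top\bigr)$, the same case analysis via Lemma~\ref{lem2.4}, and the same inertia bookkeeping for both directions. The only (immaterial) difference is that the paper first observes $A+\mu I=(\alpha-\beta)^{-1}(U^\top U-\beta J)$ is positive semidefinite as a sum of positive semidefinite matrices, whereas you deduce ${\rm in}_-(A+\mu I)=0$ from the surviving cases of the lemma.
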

\begin{proof}
Suppose that $S=\{u_1,\ldots,u_n\}$ is a spherical $\{\alpha,\beta\}$-code with rank $r$, take $\mu=\frac{1-\beta}{\alpha-\beta}$. Let $U=(u_1,\ldots,u_n)$ be the matrix whose $i$-th column is $u_i$, then the Gram matrix of $S$ is
\begin{eqnarray*}
U^\top U=(\alpha-\beta)(A+\mu I)+\beta J=(\alpha-\beta)\left(A+\mu I+\frac{\beta}{\alpha-\beta}{\rm j}{\rm j}^\top\right).
\end{eqnarray*}
By $\beta<0$ and $\beta<\alpha$, we know that $A+\mu I=(\alpha-\beta)^{-1}(U^\top U-\beta J)$ is positive semidefinite. Hence $\lambda_n(\Gamma_\alpha(S))\geq-\mu$ and
\begin{eqnarray*}
{\rm in}_-(A+\mu I)={\rm in}_-(U^\top U)=0,~{\rm rank}(A+\mu I)={\rm in}_+(A+\mu I),~{\rm in}_+(U^\top U)=r.
\end{eqnarray*}
Since ${\rm in}_-(A+\mu I)={\rm in}_-(U^\top U)$, by $U^\top U=(\alpha-\beta)(A+\mu I+\frac{\beta}{\alpha-\beta}{\rm j}{\rm j}^\top)$ and Lemma \ref{lem2.4}, we know that part (2) or (4) in Lemma \ref{lem2.4} holds, and in both cases we have ${\rm j}\in R(A+\mu I)$. Then ${\rm j}=(A+\mu I)x$ for some $x\in\mathbb{R}^n$ and $(A+\mu I)(A+\mu I)^\#{\rm j}={\rm j}$, because $(A+\mu I)(A+\mu I)^\#(A+\mu I)=A+\mu I$ from the definition of the group inverse. So we have
\begin{eqnarray*}
x^\top{\rm j}=x^\top(A+\mu I)(A+\mu I)^\#{\rm j}={\rm j}^\top(A+\mu I)^\#{\rm j}.
\end{eqnarray*}
If part (2) in Lemma \ref{lem2.4} holds, then
\begin{eqnarray*}
r={\rm in}_+(U^\top U)={\rm in}_+(A+\mu I)={\rm rank}(A+\mu I)
\end{eqnarray*}
and $\frac{\beta}{\alpha-\beta}x^\top{\rm j}=\frac{\beta}{\alpha-\beta}{\rm j}^\top(A+\mu I)^\#{\rm j}>-1$, i.e., ${\rm j}^\top(A+\mu I)^\#{\rm j}<\frac{\alpha-\beta}{-\beta}$. If part (4) in Lemma \ref{lem2.4} holds, then
\begin{eqnarray*}
{\rm rank}(A+\mu I)={\rm in}_+(A+\mu I)={\rm in}_+(U^\top U)+1=r+1
\end{eqnarray*}
and $\frac{\beta}{\alpha-\beta}x^\top{\rm j}=\frac{\beta}{\alpha-\beta}{\rm j}^\top(A+\mu I)^\#{\rm j}=-1$, i.e., ${\rm j}^\top(A+\mu I)^\#{\rm j}=\frac{\alpha-\beta}{-\beta}$.

Conversely, if an $n$-vertex graph $G$ satisfies parts (1) and (2) in this theorem, then from the above arguments, $A_G+\mu I+\frac{\beta}{\alpha-\beta}J$ is positive semidefinite with rank $r$. Hence there exists a spherical $\{\alpha,\beta\}$-code $S$ with rank $r$ whose Gram matrix is $(\alpha-\beta)\left(A_G+\mu I+\frac{\beta}{\alpha-\beta}J\right)$, i.e., $\Gamma_\alpha(S)=G$.
\end{proof}

\begin{rem}
An eigenvalue $\lambda$ of a graph $G$ is called a main eigenvalue of $G$ if $\lambda$ has an eigenvector not orthogonal to ${\rm j}$. By Theorem \ref{thm3.1}, we know that the smallest eigenvalue of $\Gamma_\alpha(S)$ is at least $-\mu$. If the smallest eigenvalue of $\Gamma_\alpha(S)$ equals to $-\mu$, then ${\rm j}\in R(A+\mu I)$ is equivalent to that every eigenvector of $-\mu$ is orthogonal to ${\rm j}$, i.e., $-\mu$ is a non-main eigenvalue of $\Gamma_\alpha(S)$. Since ${\rm j}\in R(A+\mu I)$, we have ${\rm j}^\top(A+\mu I)^\#{\rm j}={\rm j}^\top N{\rm j}$ for any matrix $N$ satisfying $(A+\mu I)N(A+\mu I)=A+\mu I$, i.e., $N$ is any $\{1\}$-inverse of $A+\mu I$.
\end{rem}

%A vertex subset $S\subseteq V(G)$ is called a $(k,\tau)$-regular set if $S$ induces a $k$-regular subgraph and every vertex not in $S$ is adjacent to exactly $\tau$ vertices of $S$.

For the $\alpha$-graph of a spherical $\{\alpha,\beta\}$-code, its induced graphs have the following properties.
\begin{thm}
Suppose that $-1\leq\beta<\alpha<1$, $\beta<0$, and $\mu=\frac{1-\beta}{\alpha-\beta}$. Let $G$ be the $\alpha$-graph of a spherical $\{\alpha,\beta\}$-code. For any $t$-vertex induced graph $H$ of $G$, we have
\begin{eqnarray*}
t^2\leq(2|E(H)|+t\mu){\rm j}^\top(A_G+\mu I)^\#{\rm j}\leq\frac{\alpha-\beta}{-\beta}(2|E(H)|+t\mu).
\end{eqnarray*}
%with equality if and only $V(H)$ is a $\left(\frac{-\beta(t-1)-1}{\alpha-\beta},\frac{-\beta t}{\alpha-\beta}\right)$-regular set of $G$.
\end{thm}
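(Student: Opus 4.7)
The plan is to reduce both inequalities to a single Cauchy--Schwarz estimate applied to the positive semidefinite matrix $M = A_G + \mu I$, using the characteristic vector of $V(H)$.

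First I would set up notation. Let $T = V(H) \subseteq V(G)$ and let ${\rm j}_T \in \mathbb{R}^n$ be the characteristic vector of $T$, so that ${\rm j}^\top {\rm j}_T = t$. Because $H$ is the \emph{induced} subgraph on $T$, a direct count gives ${\rm j}_T^\top A_G {\rm j}_T = 2|E(H)|$, and ${\rm j}_T^\top {\rm j}_T = t$, hence
\begin{equation*}
{\rm j}_T^\top M {\rm j}_T = 2|E(H)| + t\mu.
\end{equation*}
By Theorem \ref{thm3.1}, $M = A_G + \mu I$ is positive semidefinite and ${\rm j} \in R(M)$, so I can write ${\rm j} = Mx$ with $x = (A_G+\mu I)^\#{\rm j}$, and one computes $x^\top M x = {\rm j}^\top M^\# {\rm j} = {\rm j}^\top(A_G+\mu I)^\#{\rm j}$.

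The core step is the Cauchy--Schwarz inequality for the PSD bilinear form $(u,v) \mapsto u^\top M v$. Using ${\rm j} = Mx$,
\begin{equation*}
t \;=\; {\rm j}_T^\top {\rm j} \;=\; {\rm j}_T^\top M x \;\leq\; \bigl({\rm j}_T^\top M {\rm j}_T\bigr)^{1/2} \bigl(x^\top M x\bigr)^{1/2},
\end{equation*}
and squaring yields exactly
\begin{equation*}
t^2 \;\leq\; \bigl(2|E(H)| + t\mu\bigr)\, {\rm j}^\top(A_G+\mu I)^\# {\rm j},
\end{equation*}
which is the left-hand inequality. The right-hand inequality follows immediately by substituting the bound ${\rm j}^\top(A_G+\mu I)^\#{\rm j} \leq \frac{\alpha-\beta}{-\beta}$ supplied by Theorem \ref{thm3.1}(2), provided $2|E(H)|+t\mu \geq 0$, which is clear since $\mu > 0$.

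I do not anticipate any real obstacle here: once one notices that $M$ is PSD and that ${\rm j} \in R(M)$, the statement is essentially the Cauchy--Schwarz inequality in disguise, with the bound from Theorem \ref{thm3.1} absorbing the second factor. The only mildly delicate point is the justification of $x^\top M x = {\rm j}^\top M^\# {\rm j}$, which uses the defining identities $M M^\# M = M$ and the symmetry of $M^\#$; but this is already invoked in the proof of Theorem \ref{thm3.1}, so it requires no new ideas.
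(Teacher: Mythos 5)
Your proposal is correct and follows essentially the same route as the paper: both arguments reduce the statement to the Cauchy--Schwarz inequality for the positive semidefinite form given by $M=A_G+\mu I$, applied to the characteristic vector of $V(H)$ and a preimage of ${\rm j}$ under $M$, and then invoke Theorem \ref{thm3.1} for the bound ${\rm j}^\top M^\#{\rm j}\leq\frac{\alpha-\beta}{-\beta}$. The only cosmetic difference is that the paper phrases the inequality via the factorization $M^{1/2}M^{-1/2}{\rm j}={\rm j}$, whereas you use the bilinear-form version directly with $x=M^\#{\rm j}$; these are equivalent.
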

\begin{proof}
By Theorem \ref{thm3.1}, we know that 
\begin{eqnarray*}
{\rm j}\in R(A_G+\mu I),~{\rm j}^\top(A_G+\mu I)^\#{\rm j}\leq\frac{\alpha-\beta}{-\beta}.
\end{eqnarray*}
So we have 
\begin{eqnarray*}
(A_G+\mu I)^{\frac{1}{2}}(A_G+\mu I)^{-\frac{1}{2}}{\rm j}={\rm j}.
\end{eqnarray*}
Let $x$ be the vector such that $(x)_i=1$ if $i\in V(H)$, and $(x)_i=0$ otherwise. By the Cauchy-Schwarz inequality, we have
\begin{eqnarray*}
t^2&=&(x^\top{\rm j})^2=(x^\top(A_G+\mu I)^{\frac{1}{2}}(A_G+\mu I)^{-\frac{1}{2}}{\rm j})^2\leq x^\top(A_G+\mu I)x{\rm j}^\top(A_G+\mu I)^\#{\rm j}\\
&\leq&\frac{\alpha-\beta}{-\beta}(2|E(H)|+t\mu).
\end{eqnarray*}
\end{proof}
The following result is derived from the above theorem.
\begin{cor}\label{cor3.2}
Suppose that $-1\leq\beta<\alpha<1$, $\beta<0$, and $\mu=\frac{1-\beta}{\alpha-\beta}$. Let $G$ be the associated $\alpha$-graph of a spherical $\{\alpha,\beta\}$-code. Then the independence number $t$ of $G$ satisfies
\begin{eqnarray*}
t\leq\mu{\rm j}^\top(A_G+\mu I)^\#{\rm j}\leq\frac{1-\beta}{-\beta}.
\end{eqnarray*}
\end{cor}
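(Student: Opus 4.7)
The plan is to apply the preceding theorem directly, taking $H$ to be an independent set in $G$ of maximum size $t$. Since $H$ is independent, every induced edge count vanishes: $|E(H)| = 0$. Substituting this into the chain of inequalities given by the theorem,
\begin{eqnarray*}
t^2 \leq (2|E(H)| + t\mu)\,{\rm j}^\top(A_G + \mu I)^\#{\rm j} \leq \frac{\alpha-\beta}{-\beta}(2|E(H)| + t\mu),
\end{eqnarray*}
collapses both middle and right expressions, leaving only the term $t\mu$.

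Then I would divide through by $t$ (which is safe, since any nonempty $\alpha$-graph has $t \geq 1$; in the trivial case $t = 0$ there is nothing to prove). This yields
\begin{eqnarray*}
t \leq \mu\,{\rm j}^\top(A_G + \mu I)^\#{\rm j} \leq \frac{\alpha-\beta}{-\beta}\cdot\mu.
\end{eqnarray*}
Finally, substituting $\mu = \frac{1-\beta}{\alpha-\beta}$ into the rightmost quantity, the factors of $\alpha - \beta$ cancel and produce exactly $\frac{1-\beta}{-\beta}$, which matches the claimed upper bound.

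There is essentially no obstacle in this argument; it is a one-line specialization of the previous theorem to an independent subset. The only thing to be slightly careful about is handling the edge case $t = 0$ separately (or noting it is vacuous), and verifying that the algebraic cancellation $\mu \cdot \frac{\alpha-\beta}{-\beta} = \frac{1-\beta}{-\beta}$ uses the defining identity for $\mu$.
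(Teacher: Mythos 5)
Your proposal is correct and is precisely the derivation the paper intends: the corollary is stated as an immediate consequence of Theorem 3.2, obtained by taking $H$ to be a maximum independent set so that $|E(H)|=0$, dividing by $t$, and using $\mu\cdot\frac{\alpha-\beta}{-\beta}=\frac{1-\beta}{-\beta}$. Nothing further is needed.
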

%\begin{proof}
%By Theorem \ref{thm3.1}, we know that $A_G+\mu I$ is positive semidefinite, ${\rm j}\in R(A+\mu I)$ and ${\rm j}^\top(A+\mu I)^\#{\rm j}\leq\frac{\alpha-\beta}{-\beta}$. By Lemma \ref{lem2.6}, we have $t\leq\mu{\rm j}^\top(A_G+\mu I)^\#{\rm j}\leq\frac{1-\beta}{-\beta}$.
%\end{proof}

Let $K_t$ denote the complete graph with $t$ vertices. A graph $G$ is called $K_{r+1}$-free if $G$ does not contain $K_{r+1}$ as a subgraph.
\begin{thm}
Suppose that $-1\leq\beta<\alpha<1$, $\beta<0$. Let $G$ be the $\alpha$-graph of a spherical $\{\alpha,\beta\}$-code with rank $r$. If $G\neq K_{r+1}$, then $G$ is $K_{r+1}$-free.
\end{thm}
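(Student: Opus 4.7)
The plan is to assume $K_{r+1}\subseteq G$ and to deduce that every vertex of $G$ must lie in this clique, which forces $G=K_{r+1}$.

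Concretely, let $u_1,\ldots,u_{r+1}\in S$ be the unit vectors indexing a $K_{r+1}$ in $G$, so that all pairwise inner products $u_i^\top u_j$ ($i\neq j$) equal $\alpha$. Their Gram matrix is $M=(1-\alpha)I_{r+1}+\alpha J_{r+1}$, whose eigenvalues are $1-\alpha>0$ with multiplicity $r$ and $1+r\alpha$ with multiplicity one. Now $M$ must be positive semidefinite, and $\mathrm{rank}(M)=\dim\mathrm{span}\{u_1,\ldots,u_{r+1}\}\leq r$ since $S$ has rank $r$. These two constraints together force $1+r\alpha=0$, i.e.\ $\alpha=-1/r$: otherwise either $M$ has full rank $r+1$ (if $1+r\alpha>0$) or $M$ has a negative eigenvalue (if $1+r\alpha<0$), both impossible.

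In this borderline regime the null space of $M$ is spanned by ${\rm j}$. Writing $U=(u_1,\ldots,u_{r+1})$, we have $U^\top U{\rm j}=M{\rm j}=0$, hence $\|U{\rm j}\|^2=0$ and $\sum_{k=1}^{r+1}u_k=0$. For any $v\in S$ lying outside the clique, taking the inner product with this zero sum yields $\sum_{k=1}^{r+1}v^\top u_k=0$; but each term belongs to $\{\alpha,\beta\}=\{-1/r,\beta\}$, a pair of strictly negative numbers. A sum of $r+1$ strictly negative numbers cannot vanish, so no such $v$ exists. Therefore $S$ coincides with the clique, so $G=K_{r+1}$.

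The main obstacle is the knife-edge case $\alpha=-1/r$. When $\alpha>-1/r$ the rank count gives an immediate contradiction to $K_{r+1}\subseteq G$, but when the Gram matrix of the clique vectors is singular one needs a separate argument; there I would exploit the vanishing sum $\sum_k u_k=0$, where the hypothesis $\beta<0$ is indispensable at the last step to rule out any extra vector in $S$.
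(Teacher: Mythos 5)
Your proof is correct, and it takes a genuinely different route from the paper's. The paper argues entirely at the matrix level: from Theorem \ref{thm3.1} it has ${\rm j}\in R(A_G+\mu I)$ and ${\rm rank}(A_G+\mu I)\leq r+1$; the clique supplies a nonsingular principal block $J_{r+1}+(\mu-1)I$ of rank $r+1$, which forces ${\rm rank}(A_G+\mu I)=r+1$ and yields an explicit $\{1\}$-inverse $N$ supported on the clique; the consistency condition $(A_G+\mu I)N{\rm j}={\rm j}$ then demands that every outside vertex have exactly $r+\mu>r+1$ neighbours in the clique, which a $(0,1)$-matrix $B$ cannot deliver. You instead work directly with the unit vectors: the rank constraint forces the clique's Gram matrix $(1-\alpha)I+\alpha J$ to be singular, pinning down $\alpha=-1/r$ and giving $\sum_k u_k=0$, after which $\beta<\alpha<0$ makes $\sum_k v^\top u_k=0$ impossible for any vector $v$ outside the clique. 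Your argument is more elementary (it needs neither group/\,$\{1\}$-inverses nor the condition ${\rm j}\in R(A_G+\mu I)$, only positive semidefiniteness of Gram matrices and the rank bound) and it extracts extra structural information: a $K_{r+1}$ in the $\alpha$-graph of a rank-$r$ code can occur only in the rigid configuration $\alpha=-1/r$ with the clique vectors summing to zero. The paper's computation, on the other hand, fits uniformly into the generalized-inverse framework it uses throughout Sections 3--5. One cosmetic remark: at the last step what you actually use is $\beta<\alpha=-1/r$, so the standing hypothesis $\beta<0$ is a consequence rather than an independent input there.
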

\begin{proof}
Suppose that $K_{r+1}$ is a subgraph of $G$. Since $G\neq K_{r+1}$, we can write $A_G+\mu I$ as 
\begin{eqnarray*}
A_G+\mu I=\begin{pmatrix}J_{r+1}+(\mu-1)I&B\\B^\top&C\end{pmatrix},
\end{eqnarray*}
where $\mu=\frac{1-\beta}{\alpha-\beta}>1$. By Theorem \ref{thm3.1}, we have ${\rm j}\in R(A_G+\mu I)$ and ${\rm rank}(A_G+\mu I)\leq r+1$. Since $J_{r+1}+(\mu-1)I$ is nonsingular with ${\rm rank}(J_{r+1}+(\mu-1)I)=r+1$, we have 
\begin{eqnarray*}
{\rm rank}(A_G+\mu I)&=&{\rm rank}(J_{r+1}+(\mu-1)I),\\
C&=&B^\top(J_{r+1}+(\mu-1)I)^{-1}B. 
\end{eqnarray*}
Then $N=\begin{pmatrix}(J_{r+1}+(\mu-1)I)^{-1}&0\\0&0\end{pmatrix}$ is a $\{1\}$-inverse of $A_G+\mu I$. By ${\rm j}\in R(A_G+\mu I)$ we get $(A_G+\mu I)N{\rm j}={\rm j}$. By computation, we have
\begin{eqnarray*}
(A_G+\mu I)N{\rm j}=\begin{pmatrix}{\rm j}\\B^\top(J_{r+1}+(\mu-1)I)^{-1}{\rm j}\end{pmatrix}=\begin{pmatrix}{\rm j}\\(r+\mu)^{-1}B^\top{\rm j}\end{pmatrix}.
\end{eqnarray*}
We can not choose a $(0,1)$-matrix $B$ such that $(r+\mu)^{-1}B^\top{\rm j}={\rm j}$, a contradiction. Hence $G$ is $K_{r+1}$-free.
\end{proof}

%\begin{thm}
%Let $G=\Gamma_\alpha(S)$ be an $n$-vertex graph associated with a spherical $\{\alpha,\beta\}$-code, where $-1\leq\beta<\alpha<1$, $\beta<0$ and $k\alpha\neq-1$ for any positive integer $k$. There exists a constant $c>0$ such that if $\max\{\frac{-\beta}{\alpha-\beta}n-\frac{1-\beta}{\alpha-\beta},(\frac{1-\beta}{\alpha-\beta})^c\omega(G)\}$ is sufficiently large with respect to $\frac{\ln(\frac{1-\beta}{\alpha-\beta})}{\ln\omega(G)+c\ln(\frac{1-\beta}{\alpha-\beta})}$, then
%\begin{eqnarray*}
%n\leq\frac{\alpha-\beta}{-\beta}\left((\frac{1-\beta}{\alpha-\beta})^c(\omega(G)-1)+\frac{1-\beta}{\alpha-\beta}\right).
%\end{eqnarray*}
%\end{thm}
For a vertex $u$ of $G$, let $N_G(u)=\{v:\{v,u\}\in E(G)\}$ denote the open neighborhood of $u$, and let $N_G[u]=N_G(u)\cup\{u\}$ denote the closed neighborhood of $u$. Let $G_u$ denote the subgraph of $G$ induced by $N_G(u)$, and let $G-N_G[u]$ denote the induced subgraph of $G$ by deleting all vertices in $N_G[u]$.
\begin{thm}
Suppose that $-1\leq\beta<\alpha<1$, $\beta<0$. Let $G$ be the $\alpha$-graph of a spherical $\{\alpha,\beta\}$-code. For any vertex $u$ of $G$, the induced subgraphs $G_u$ and $G-N_G[u]$ satisfy
\begin{eqnarray*}
{\rm j}^\top(A_{G_u}+\mu I)^\#{\rm j}&\leq&\frac{\alpha-\beta}{\alpha^2-\beta},\\
{\rm rank}(A_{G_u}+\mu I)&\leq&{\rm rank}(A_G+\mu I)-1,\\
{\rm j}^\top(A_{G-N_G[u]}+\mu I)^\#{\rm j}&\leq&\frac{\alpha-\beta}{-\beta(1-\beta)},\\
{\rm rank}(A_{G-N_G[u]}+\mu I)&\leq&{\rm rank}(A_G+\mu I)-1.
\end{eqnarray*}
%Moreover, if ${\rm j}^\top(A_{G_u}+\mu I)^\#{\rm j}=\frac{\alpha-\beta}{\alpha^2-\beta}$, then ${\rm j}^\top(A_G+\mu I)^\#{\rm j}=\frac{\alpha-\beta}{-\beta}$.
\end{thm}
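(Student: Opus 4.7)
The plan is to reduce everything to Theorem~\ref{thm3.1} applied to two derived spherical codes obtained by projecting $S$ onto $u^\perp$. Let $S=\{u_1,\dots,u_n\}$ be the $\{\alpha,\beta\}$-code realizing $G$, with rank $r$, and fix the vertex $u\in S$.

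For the statements about $G_u$, I would define
\begin{eqnarray*}
S'=\left\{\hat v=\frac{v-\alpha u}{\sqrt{1-\alpha^2}}:v\in N_G(u)\right\}.
\end{eqnarray*}
A routine inner-product computation shows that $S'$ is a spherical $\{\alpha',\beta'\}$-code with $\alpha'=\alpha/(1+\alpha)$ and $\beta'=(\beta-\alpha^2)/(1-\alpha^2)$; note $\beta'<0$ because $\beta<0\leq\alpha^2$. The crucial arithmetic identity $(1-\beta')/(\alpha'-\beta')=\mu$ says that $S'$ has the same $\mu$ as $S$, and by construction $\Gamma_{\alpha'}(S')=G_u$. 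Theorem~\ref{thm3.1} applied to $S'$ then gives
\begin{eqnarray*}
{\rm j}^\top(A_{G_u}+\mu I)^\#{\rm j}\leq\frac{\alpha'-\beta'}{-\beta'}=\frac{\alpha-\beta}{\alpha^2-\beta}.
\end{eqnarray*}
The bound for $G-N_G[u]$ is obtained by exactly the same trick with
\begin{eqnarray*}
S''=\left\{\tilde v=\frac{v-\beta u}{\sqrt{1-\beta^2}}:v\in V(G)\setminus N_G[u]\right\}
\end{eqnarray*}
(valid for $\beta>-1$; the case $\beta=-1$ is handled by noting that $V(G)\setminus N_G[u]$ then contains at most the antipode $-u$). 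This $S''$ is a spherical $\{\alpha'',\beta''\}$-code with $\alpha''=(\alpha-\beta^2)/(1-\beta^2)$, $\beta''=\beta/(1+\beta)$, the same $\mu$, and $\Gamma_{\alpha''}(S'')=G-N_G[u]$, and Theorem~\ref{thm3.1} produces the displayed bound on ${\rm j}^\top(A_{G-N_G[u]}+\mu I)^\#{\rm j}$.

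For the rank inequalities, both $S'$ and $S''$ lie in $u^\perp\cap{\rm span}(S)$, so each has rank at most $r-1$. By Theorem~\ref{thm3.1} applied to $S'$ (resp.\ $S''$), ${\rm rank}(A_{G_u}+\mu I)\leq{\rm rank}(S')+1\leq r$ and similarly for $G-N_G[u]$. The theorem also tells us ${\rm rank}(A_G+\mu I)\in\{r,r+1\}$, so the desired inequality is immediate in the case ${\rm rank}(A_G+\mu I)=r+1$.

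The main obstacle is the remaining case ${\rm rank}(A_G+\mu I)=r$, where I need ${\rm rank}(A_{G_u}+\mu I)\leq r-1$ rather than $\leq r$. The key observation is that, by the Schur-complement analysis underlying Theorem~\ref{thm3.1}, ${\rm rank}(A_G+\mu I)=r$ is equivalent to ${\rm j}\in R(U^\top U)$, i.e., there exists $y$ with $u_i^\top y=1$ for every $i$. A direct computation then shows that $\tilde y=\sqrt{(1+\alpha)/(1-\alpha)}\,(y-u)$ satisfies $\hat v^\top\tilde y=1$ for every $\hat v\in S'$, so ${\rm j}$ lies in the range of the Gram matrix of $S'$; by Theorem~\ref{thm3.1} applied to $S'$, this forces ${\rm rank}(A_{G_u}+\mu I)={\rm rank}(S')\leq r-1$. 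The analogous vector $\sqrt{(1+\beta)/(1-\beta)}\,(y-u)$ handles $S''$, finishing the rank inequality.
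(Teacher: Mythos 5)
Your strategy is genuinely different from the paper's. The paper never leaves the matrix world: it borders $A_{G_u}+\mu I$ into $A_{N_G[u]}+\mu I=\left(\begin{smallmatrix}A_{G_u}+\mu I&{\rm j}\\{\rm j}^\top&\mu\end{smallmatrix}\right)$, uses the Schur complement to compare ranks, writes down an explicit $\{1\}$-inverse of the bordered matrix to evaluate ${\rm j}^\top(A_{N_G[u]}+\mu I)^\#{\rm j}=x+(\mu-x)^{-1}(x-1)^2$, and then extracts the bound $x\leq\frac{p\mu-1}{\mu+p-2}$ from Theorem~\ref{thm3.1} applied to the induced subcode on $N_G[u]$; the $G-N_G[u]$ case is the analogous block-diagonal computation. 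You instead project the code onto $u^\perp$ and recognize the neighborhoods as new spherical two-distance codes with the same $\mu$ and parameters $(\alpha_0,\beta_0)$, then quote Theorem~\ref{thm3.1} for those. Your route is more conceptual --- it explains why the constants $\frac{\alpha-\beta}{\alpha^2-\beta}$ and $\frac{\alpha-\beta}{-\beta(1-\beta)}$ are exactly the quantities $\frac{\alpha'-\beta'}{-\beta'}$ for the derived parameters, which is precisely the observation the paper redoes by hand in Theorem 5.2 --- and I checked that the arithmetic ($\mu'=\mu''=\mu$, the values of $\alpha',\beta',\alpha'',\beta''$, and the vectors $\tilde y$ certifying ${\rm j}\in R(U'^\top U')$) is correct. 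Your unstated auxiliary claim that ${\rm rank}(A_G+\mu I)=r$ iff ${\rm j}\in R(U^\top U)$ is also true, though it is not in the statement of Theorem~\ref{thm3.1}: it has to be extracted from the case split between parts (2) and (4) of Lemma~\ref{lem2.4} in that theorem's proof, so it deserves a short argument of its own.

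There is, however, one concrete gap: you verify $\beta'<0$ but not $\beta'\geq-1$, and Theorem~\ref{thm3.1} as stated requires $-1\leq\beta'$. In fact $\beta'=\frac{\beta-\alpha^2}{1-\alpha^2}\geq-1$ iff $\beta\geq 2\alpha^2-1$, which fails for, say, $\alpha=0.9$, $\beta=-0.9$ (giving $\beta'=-9$); likewise $\beta''=\frac{\beta}{1+\beta}\geq-1$ iff $\beta\geq-\frac12$, so the problem for $S''$ is not confined to $\beta=-1$ as you suggest. In these ranges you cannot cite Theorem~\ref{thm3.1} literally. The situation is repairable: when $\beta'<-1$ (resp.\ $\beta''<-1$) the value $\beta'$ is never attained as an inner product --- equivalently $G_u$ (resp.\ $G-N_G[u]$) is complete, as one sees from positive semidefiniteness of the $3\times3$ Gram minors --- and the forward direction of Theorem~\ref{thm3.1} only uses that the Gram matrix equals $(\alpha'-\beta')(A+\mu I)+\beta'J$ with $\beta'<0<\alpha'-\beta'$, so its proof goes through verbatim without the hypothesis $\beta'\geq-1$. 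But as written your argument invokes the theorem outside its stated hypotheses, and this needs to be said and patched.
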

\begin{proof}
Let $H$ be the subgraph induced by $N_G[u]$. We can write $A_H+\mu I$ as 
\begin{eqnarray*}
A_H+\mu I=\begin{pmatrix}A_{G_u}+\mu I&{\rm j}\\{\rm j}^\top&\mu\end{pmatrix}.
\end{eqnarray*}
Since ${\rm j}\in R(A_{G_u}+\mu I)$, we have
\begin{eqnarray*}
{\rm rank}(A_H+\mu I)={\rm rank}(A_{G_u}+\mu I)+{\rm rank}(\mu-{\rm j}^\top(A_{G_u}+\mu I)^\#{\rm j})\leq{\rm rank}(A_G+\mu I).
\end{eqnarray*}
Let $x={\rm j}^\top(A_{G_u}+\mu I)^\#{\rm j}$. Since $A_H+\mu I$ is positive semidefinite, we have $\mu-x\geq0$. If $x=\mu$, then $N=\begin{pmatrix}(A_{G_u}+\mu I)^\#&0\\0&0\end{pmatrix}$ is a $\{1\}$-inverse of $A_H+\mu I$ and $(A_H+\mu I)N{\rm j}={\rm j}$. By computation, we have $(A_H+\mu I)N{\rm j}=\begin{pmatrix}{\rm j}\\ \mu\end{pmatrix}\neq{\rm j}$, a contradiction. So $x<\mu$ and
\begin{eqnarray*}
{\rm rank}(A_{G_u}+\mu I)\leq{\rm rank}(A_G+\mu I)-1.
\end{eqnarray*}
By $x<\mu$ and ${\rm j}\in R(A_{G_u}+\mu I)$, we can get
\begin{eqnarray*}
(A_H+\mu I)^{(1)}=\begin{pmatrix}(A_{G_u}+\mu I)^\#+(\mu-x)^{-1}(A_{G_u}+\mu I)^\#{\rm j}{\rm j}^\top(A_{G_u}+\mu I)^\#&-(\mu-x)^{-1}(A_{G_u}+\mu I)^\#{\rm j}\\-(\mu-x)^{-1}(A_{G_u}+\mu I)^\#&(\mu-x)^{-1}\end{pmatrix}
\end{eqnarray*}
Then 
\begin{eqnarray*}
{\rm j}^\top(A_H+\mu I)^\#{\rm j}={\rm j}^\top(A_H+\mu I)^{(1)}{\rm j}=x+(\mu-x)^{-1}(x-1)^2\leq\frac{\alpha-\beta}{-\beta}.
\end{eqnarray*}
Take $p=\frac{\alpha-\beta}{-\beta}$, then $x\leq\frac{p\mu-1}{\mu+p-2}=\frac{\alpha-\beta}{\alpha^2-\beta}$.

Let $H_1$ be the subgraph of $G$ induced by $\{u\}\cup\{v:v\notin N_G[u]\}$. Then $A_{H_1}+\mu I=\begin{pmatrix}A_{H_2}+\mu I&0\\0&\mu\end{pmatrix}$, where $H_2=G-N_G[u]$. Hence
\begin{eqnarray*}
{\rm rank}(A_{H_1}+\mu I)&=&{\rm rank}(A_{H_2}+\mu I)+1\leq{\rm rank}(A_G+\mu I),\\
{\rm j}^\top(A_{H_1}+\mu I)^\#{\rm j}&=&{\rm j}^\top(A_{H_2}+\mu I)^\#{\rm j}+\mu^{-1}\leq\frac{\alpha-\beta}{-\beta},\\
{\rm j}^\top(A_{H_2}+\mu I)^\#{\rm j}&\leq&\frac{\alpha-\beta}{-\beta}-\mu^{-1}=\frac{\alpha-\beta}{-\beta(1-\beta)}.
\end{eqnarray*}
\end{proof}

%\begin{thm}
%Suppose that $-1\leq\beta<\alpha<1$, $\beta<0$. Let $G=\Gamma_\alpha(S)$ be a graph associated with a spherical $\{\alpha,\beta\}$-code $S$, and let $G_0=G,G_1,\ldots,G_k$ be a sequence of graphs such that $G_{i+1}$ is the subgraph of $G_i$ induced by $N_{G_i}(u_i)$, where $u_i$ is a vertex of $G_i$ ($i=0,\ldots,k-1$). Then
%\begin{eqnarray*}
%{\rm j}^\top(A_{G_k}+\mu I)^\#{\rm j}\leq1+b_k^{-1},{\rm rank}(A_{G_k}+\mu I)\leq{\rm rank}(A_G+\mu I)-k,
%\end{eqnarray*}
%where $b_k=\frac{-\beta}{\alpha}+\frac{k(\alpha-\beta)}{1-\alpha}$.
%\end{thm}

\section{The associated $\beta$-graphs of spherical two-distance sets}
Let $S=\{u_1,\ldots,u_n\}$ be a spherical $\{\alpha,\beta\}$-code ($-1\leq\beta<\alpha<1$), and let $\Gamma_\beta(S)$ denote the associated $\beta$-graph with vertex set $S$ and edge set $E(\Gamma_\beta(S))=\{u_iu_j:u_i^\top u_j=\beta,i\neq j\}$. There exists correspondence between spherical $\{0,\beta\}$-codes and $\beta$-graphs as follows.
\begin{pro}\label{pro4.1}
Suppose that $-1\leq\beta<0$. If $S$ is a spherical $\{0,\beta\}$-code with size $n$ and rank $r$, then one of the following holds:\\
(1) $\lambda_1(\Gamma_\beta(S))<\frac{1}{-\beta}$ and $n=r$.\\
(2) $\lambda_1(\Gamma_\beta(S))=\frac{1}{-\beta}$ is an eigenvalue of $\Gamma_\beta(S)$ with codimension $r$.\\
Conversely, if an $n$-vertex graph $G$ satisfies (1) or (2), then there exists a spherical $\{0,\beta\}$-code $S$ with rank $r$ such that $\Gamma_\beta(S)=G$.
\end{pro}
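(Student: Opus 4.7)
\medskip
\noindent\textbf{Proof proposal for Proposition \ref{pro4.1}.} The plan is to encode the combinatorial hypothesis into the single matrix identity
\begin{eqnarray*}
U^\top U=I+\beta A,
\end{eqnarray*}
where $U=(u_1,\ldots,u_n)$ is the matrix whose columns are the vectors of $S$ and $A=A_{\Gamma_\beta(S)}$; this is immediate from $u_i^\top u_i=1$ and $u_i^\top u_j\in\{0,\beta\}$ for $i\neq j$, together with the definition of the $\beta$-graph. Since $U^\top U$ is positive semidefinite with rank $r$ and $\beta<0$, every eigenvalue of $A$ is at most $1/(-\beta)$, and in particular $\lambda_1(\Gamma_\beta(S))\leq 1/(-\beta)$. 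This upper bound is the backbone of the dichotomy.

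I would then split on whether the inequality is strict. If $\lambda_1(\Gamma_\beta(S))<1/(-\beta)$, every eigenvalue of $I+\beta A$ is strictly positive, so $I+\beta A$ is nonsingular and therefore $n={\rm rank}(I+\beta A)={\rm rank}(U^\top U)=r$, yielding case~(1). If instead $\lambda_1(\Gamma_\beta(S))=1/(-\beta)$ with multiplicity $m$, then the kernel of $I+\beta A$ has dimension exactly $m$ while the remaining eigenvalues of $I+\beta A$ are strictly positive; hence $r={\rm rank}(U^\top U)={\rm rank}(I+\beta A)=n-m$, which is precisely the codimension of $1/(-\beta)$, yielding case~(2).

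For the converse, suppose an $n$-vertex graph $G$ satisfies (1) or~(2). The same eigenvalue bookkeeping shows that $I+\beta A_G$ is positive semidefinite of rank~$r$: in case~(1) it is positive definite and $r=n$, while in case~(2) it has kernel of dimension $n-r$ and all other eigenvalues strictly positive. Factoring $I+\beta A_G=V^\top V$ via the spectral decomposition with $V$ having $r$ rows, the columns of $V$ are unit vectors in $\mathbb{R}^r$ whose pairwise inner products lie in $\{0,\beta\}$, and the $\beta$-pairs are exactly the edges of $G$. Thus $V$ realizes a spherical $\{0,\beta\}$-code $S$ of rank $r$ with $\Gamma_\beta(S)=G$. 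The argument is essentially a spectral reformulation with no genuine obstacle; the only point that warrants attention is matching the rank of $I+\beta A_G$ with the codimension of $1/(-\beta)$ in case~(2), which is exactly the content of the definition of codimension given in Section~2.
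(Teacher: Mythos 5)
Your proposal is correct and follows essentially the same route as the paper: your identity $U^\top U=I+\beta A$ is exactly the paper's $U^\top U=-\beta(\lambda I-A)$ with $\lambda=\tfrac{1}{-\beta}$, and the case split on whether $\lambda_1(\Gamma_\beta(S))$ attains $\tfrac{1}{-\beta}$, together with the rank/codimension bookkeeping and the Gram-factorization converse, matches the paper's argument. No substantive differences to report.
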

\begin{proof}
Suppose that $S=\{u_1,\ldots,u_n\}$ is a spherical $\{0,\beta\}$-code with rank $r$, and let $\lambda=\frac{1}{-\beta}$. Let $U$ be the matrix whose $i$-th column is $u_i$, then the Gram matrix of $S$ is $U^\top U=-\beta(\lambda I-A)$, where $A$ is the adjacency matrix of the $\beta$-graph $G=\Gamma_\beta(S)$. Then $\lambda I-A$ is positive semidefinite with ${\rm rank}(\lambda I-A)={\rm rank}(U^\top U)=r$ and $\lambda_1(G)\leq\lambda$.

If $\lambda_1(G)<\lambda$, then $n={\rm rank}(\lambda I-A)=r$, so $S$ satisfies part (1). If $\lambda_1(G)=\lambda$, then $\lambda$ is an eigenvalue of $G$ with codimension $r$, so $S$ satisfies part (2).

Conversely, if an $n$-vertex graph $G$ satisfies (1) or (2), then from the above arguments, there exists a spherical $\{\alpha,\beta\}$-code $S$ with rank $r$ such that $\Gamma_\beta(S)=G$.
\end{proof}
For $-1\leq\beta<\alpha<1$, $\alpha>0$, we give the correspondence between spherical $\{\alpha,\beta\}$-codes and $\beta$-graphs as follows.
\begin{thm}\label{thm4.2}
Suppose that $-1\leq\beta<\alpha<1$, $\alpha>0$, and let $\lambda=\frac{1-\alpha}{\alpha-\beta}$. If $S$ is a spherical $\{\alpha,\beta\}$-code with size $n$ and rank $r$, then one of the following holds:\\
(1) $\lambda_1(\Gamma_\beta(S))<\lambda$ and $n=r$.\\
(2) $\lambda_1(\Gamma_\beta(S))=\lambda$ is an eigenvalue of $\Gamma_\beta(S)$ with codimension $r-1$.\\
(3) Let $A$ be the adjacency matrix of $\Gamma_\beta(S)$, then
\begin{eqnarray*}
{\rm in}_-(\lambda I-A)=1,~{\rm j}\in R(\lambda I-A),~{\rm j}^\top(\lambda I-A)^\#{\rm j}\leq\frac{\alpha-\beta}{-\alpha},\\
{\rm rank}(\lambda I-A)=\begin{cases}r+1~~~~\mbox{if}~~{\rm j}^\top(\lambda I-A)^\#{\rm j}=\frac{\alpha-\beta}{-\alpha},\\
r~~~~~~~~~\mbox{otherwise}.\end{cases}
\end{eqnarray*}
Conversely, if an $n$-vertex graph $G$ satisfies one of (1), (2) and (3), then there exists a spherical $\{\alpha,\beta\}$-code $S$ with rank $r$ such that $\Gamma_\beta(S)=G$.
\end{thm}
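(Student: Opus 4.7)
The plan is to mirror the proof of Theorem \ref{thm3.1}, but since $\alpha>0$ makes the rank-one perturbation coefficient positive, Lemma \ref{lem2.5} replaces Lemma \ref{lem2.4} as the inertia tool. Writing $M:=\lambda I-A$, $c:=\frac{\alpha}{\alpha-\beta}>0$ and $U=(u_1,\ldots,u_n)$, a direct expansion of the Gram matrix gives
\begin{equation*}
U^\top U=(1-\alpha)I+(\beta-\alpha)A+\alpha J=(\alpha-\beta)\bigl(M+c\,{\rm j}{\rm j}^\top\bigr).
\end{equation*}
Since $U^\top U$ is positive semidefinite of rank $r$ and $\alpha-\beta>0$, the matrix $M+c\,{\rm j}{\rm j}^\top$ has ${\rm in}_+=r$ and ${\rm in}_-=0$, and I would apply Lemma \ref{lem2.5} to $(M,{\rm j},c)$ to read off the inertia and rank of $M$ itself.

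Part (1) of Lemma \ref{lem2.5} forces ${\rm j}\notin R(M)$ and $M$ positive semidefinite of rank $r-1$, so $\lambda_1(G)=\lambda$ with codimension $r-1$, which is case (2). Part (2) (with $cx^\top{\rm j}>-1$) gives $M$ positive semidefinite of rank $r$, and when $r=n$ this is precisely case (1). Parts (3) and (4) give ${\rm in}_-(M)=1$ and ${\rm j}\in R(M)$, matching the hypothesis of case (3); the scalar condition $cx^\top{\rm j}<-1$ or $=-1$ then translates into ${\rm j}^\top M^\#{\rm j}<\frac{\alpha-\beta}{-\alpha}$ or $=\frac{\alpha-\beta}{-\alpha}$ via the identity $x^\top{\rm j}={\rm j}^\top M^\#{\rm j}$ (derived exactly as in the proof of Theorem \ref{thm3.1} from $MM^\#M=M$), with $\mathrm{rank}(M)=r$ or $r+1$ respectively. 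For the converse, given $G$ satisfying one of (1)--(3), I would form $N:=(\alpha-\beta)(\lambda I-A_G)+\alpha J$, use the relevant part of Lemma \ref{lem2.5} in reverse to verify that $N$ is positive semidefinite of rank $r$, factor $N=U^\top U$, and read off from the structure of $N$ that the columns of $U$ constitute a spherical $\{\alpha,\beta\}$-code whose $\beta$-graph is $G$.

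The step that I expect to need the most care is verifying exhaustiveness of the trichotomy. In particular, the subcase of Lemma \ref{lem2.5} part (2) with $\mathrm{rank}(M)=r<n$ (where $M$ is positive semidefinite of rank $r$ with ${\rm j}\in R(M)$, seemingly producing $\lambda_1(G)=\lambda$ of codimension $r$ rather than $r-1$) must be examined carefully and either ruled out or absorbed into case (2). This bookkeeping, rather than any fundamentally new idea beyond Theorem \ref{thm3.1}, is the principal technical burden; the rest of the argument is a sign-flipped transcription of the $\alpha$-graph analysis.
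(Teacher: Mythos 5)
Your reduction is the same as the paper's: write the Gram matrix as $(\alpha-\beta)\bigl(\lambda I-A+\frac{\alpha}{\alpha-\beta}{\rm j}{\rm j}^\top\bigr)$, compare against ${\rm in}_+(U^\top U)=r$, ${\rm in}_-(U^\top U)=0$, and run the four cases of Lemma \ref{lem2.5}; parts (3)--(4) yield case (3) exactly as you describe, and the converse is handled the same way. The one step you flag but do not close --- the subcase of Lemma \ref{lem2.5}(2) in which $M=\lambda I-A$ is positive semidefinite and singular with ${\rm j}\in R(M)$, which would produce $\lambda_1(G)=\lambda$ with codimension $r$ rather than $r-1$ --- is a genuine gap as written, and it is the only place where an idea beyond sign-flipping Theorem \ref{thm3.1} is required. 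The paper rules it out by Perron--Frobenius: if $\lambda_1(G)=\lambda$, then $\lambda$ has a nonnegative nonzero eigenvector $v$ of $A_G$ (the Perron vector of a component attaining $\lambda_1$, padded with zeros), so $v^\top{\rm j}>0$; since $M$ is symmetric, $R(M)=\ker(M)^\perp$ and $v\in\ker(M)$, hence ${\rm j}\notin R(M)$. Consequently, whenever $M$ is positive semidefinite and ${\rm j}\in R(M)$, $M$ must be nonsingular, so Lemma \ref{lem2.5}(2) lands only in case (1) with $n=r$. Inserting this observation makes your trichotomy exhaustive and completes the proof; everything else in your outline matches the paper's argument.
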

\begin{proof}
Suppose that $S=\{u_1,\ldots,u_n\}$ is a spherical $\{\alpha,\beta\}$-code with rank $r$. Let $U$ be the matrix whose $i$-th column is $u_i$, then the Gram matrix of $S$ is
\begin{eqnarray*}
U^\top U=(\alpha-\beta)(\lambda I-A)+\alpha J=(\alpha-\beta)\left(\lambda I-A+\frac{\alpha}{\alpha-\beta}{\rm j}{\rm j}^\top\right),
\end{eqnarray*}
where $A$ is the adjacency matrix of $G=\Gamma_\beta(S)$. Then ${\rm in}_+(U^\top U)=r$ and ${\rm in}_-(U^\top U)=0$. By Lemma \ref{lem2.5}, we know that ${\rm in}_-(\lambda I-A)\leq1$. We only need to consider the cases: (i) $\lambda_1(G)<\lambda$; (ii) $\lambda_1(G)=\lambda$; (iii) $\lambda_1(G)>\lambda$ and $\lambda_2(G)\leq\lambda$.

If $\lambda_1(G)<\lambda$, then $\lambda I-A$ and $U^\top U=(\alpha-\beta)(\lambda I-A)+\alpha J$ are both positive definite. Then $n={\rm rank}(U^\top U)=r$. So $S$ satisfies part (1) in this theorem.

If $\lambda_1(G)=\lambda$, then ${\rm in}_-(\lambda I-A)={\rm in}_-(U^\top U)=0$. So part (1) or (2) in Lemma \ref{lem2.5} holds. Notices that ${\rm j}\notin R(\lambda I-A)$, because $\lambda_1(G)=\lambda$ has a nonnegative eigenvector which is not orthogonal to ${\rm j}$. Hence part (1) in Lemma \ref{lem2.5} holds and ${\rm rank}(\lambda I-A)={\rm in}_+(\lambda I-A)={\rm in}_+(U^\top U)-1=r-1$. So $S$ satisfies part (2) in this theorem.

If $\lambda_1(G)>\lambda$ and $\lambda_2(G)\leq\lambda$, then ${\rm in}_-(\lambda I-A)=1$. Since ${\rm in}_-(U^\top U)=0$, part (3) or (4) in Lemma \ref{lem2.5} holds, and in both cases we have ${\rm j}\in R(A+\mu I)$. Then ${\rm j}=(\lambda I-A)x$ for some $x\in\mathbb{R}^n$ and $(\lambda I-A)(\lambda I-A)^\#{\rm j}={\rm j}$. So we have
\begin{eqnarray*}
x^\top{\rm j}=x^\top(\lambda I-A)(\lambda I-A)^\#{\rm j}={\rm j}^\top(\lambda I-A)^\#{\rm j}.
\end{eqnarray*}
If part (3) in Lemma \ref{lem2.5} holds, then
\begin{eqnarray*}
{\rm rank}(\lambda I-A)={\rm rank}(U^\top U)=r
\end{eqnarray*}
and $\frac{\alpha}{\alpha-\beta}x^\top{\rm j}=\frac{\alpha}{\alpha-\beta}{\rm j}^\top(A+\mu I)^\#{\rm j}<-1$, i.e., ${\rm j}^\top(A+\mu I)^\#{\rm j}<\frac{\alpha-\beta}{-\alpha}$. If part (4) in Lemma \ref{lem2.5} holds, then
\begin{eqnarray*}
r={\rm in}_+(U^\top U)={\rm in}_+(\lambda I-A),~{\rm in}_-(\lambda I-A)=1,~{\rm rank}(\lambda I-A)=r+1
\end{eqnarray*}
and $\frac{\alpha}{\alpha-\beta}x^\top{\rm j}=\frac{\alpha}{\alpha-\beta}{\rm j}^\top(\lambda I-A)^\#{\rm j}=-1$, i.e., ${\rm j}^\top(\lambda I-A)^\#{\rm j}=\frac{\alpha-\beta}{-\alpha}$. So $S$ satisfies part (3) in this theorem.

Conversely, if an $n$-vertex graph $G$ satisfies one of (1), (2) and (3) in this theorem, then from the above arguments, $\lambda I-A_G+\frac{\alpha}{\alpha-\beta}J$ is positive semidefinite with rank $r$. Hence there exists a spherical $\{\alpha,\beta\}$-code $S$ with rank $r$ whose Gram matrix is $(\alpha-\beta)\left(\lambda I-A_G+\frac{\alpha}{\alpha-\beta}J\right)$, i.e., $\Gamma_\beta(S)=G$.
\end{proof}

\begin{rem}
In Theorem \ref{thm4.2}(3), ${\rm in}_-(\lambda I-A)=1$ means that $\lambda_1(\Gamma_\beta(S))>\lambda,\lambda_2(\Gamma_\beta(S))\leq\lambda$. If $\lambda_2(\Gamma_\beta(S))=\lambda$, then ${\rm j}\in R(\lambda I-A)$ is equivalent to that every eigenvector of $\lambda$ is orthogonal to ${\rm j}$. Since ${\rm j}\in R(\lambda I-A)$, we have ${\rm j}^\top(\lambda I-A)^\#{\rm j}={\rm j}^\top N{\rm j}$ for any matrix $N$ satisfying $(\lambda I-A)N(\lambda I-A)=\lambda I-A$, i.e., $N$ is any $\{1\}$-inverse of $\lambda I-A$.
\end{rem}

\section{Bounds on spherical two-distance sets}
Let $\mathcal{G}_{\mu,r}$ be the set of connected graph $G$ such that $\lambda(G)\geq-\mu$, ${\rm j}\in R(A_G+\mu I)$ and ${\rm rank}(A_G+\mu I)\leq r$.
\begin{thm}
For a constant $\mu>1$, we have
\begin{eqnarray*}
\max_{\substack{G\in\mathcal{G}_{\mu,d+1},\\G\neq K_{d+1}}}\left\{\left\lfloor\frac{d+1}{{\rm rank}(A_G+\mu I)}\right\rfloor|V(G)|+q\right\}\leq\max_{\substack{\frac{1-\beta}{\alpha-\beta}=\mu,\\-1\leq\beta<\alpha<1,\\ \beta<0}}N_{\alpha,\beta}(d)\leq(d+1)\max_{G\in\mathcal{G}_{\mu,d+1}}\frac{|V(G)|}{{\rm rank}(A_G+\mu I)},
\end{eqnarray*}
where $q=d+1-{\rm rank}(A_G+\mu I)\left\lfloor\frac{d+1}{{\rm rank}(A_G+\mu I)}\right\rfloor$.
\end{thm}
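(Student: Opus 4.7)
The theorem splits into a lower bound (the left inequality) and an upper bound (the right inequality), both of which I would establish via the correspondence of Theorem \ref{thm3.1}.

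For the upper bound, I would fix an arbitrary spherical $\{\alpha,\beta\}$-code $S$ of size $n$ and rank $r\le d$ with $\frac{1-\beta}{\alpha-\beta}=\mu$, and decompose $\Gamma_\alpha(S)$ into its connected components $G_1,\ldots,G_s$. Since $A_{\Gamma_\alpha(S)}+\mu I$ is block diagonal, each $G_i$ satisfies $\lambda_n(G_i)\ge-\mu$ and ${\rm j}\in R(A_{G_i}+\mu I)$ (the all-ones vector restricts to each component), and the ranks add: $\sum_i\mathrm{rank}(A_{G_i}+\mu I)=\mathrm{rank}(A_{\Gamma_\alpha(S)}+\mu I)$. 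By Theorem \ref{thm3.1}(2) this total rank is either $r$ or $r+1$, and in either case at most $d+1$, so each $G_i\in\mathcal{G}_{\mu,d+1}$. Letting $M$ denote the right-hand-side maximum, $|V(G_i)|\le M\cdot\mathrm{rank}(A_{G_i}+\mu I)$ by definition, and summing over $i$ yields $n\le M(d+1)$.

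For the lower bound, I would fix $G\in\mathcal{G}_{\mu,d+1}$ with $G\ne K_{d+1}$, set $r=\mathrm{rank}(A_G+\mu I)$, $k=\lfloor(d+1)/r\rfloor$, $q=d+1-kr$, and form the disjoint union $H=kG\sqcup qK_1$. Block-diagonality immediately gives $\lambda_n(H)\ge-\mu$, ${\rm j}\in R(A_H+\mu I)$, $\mathrm{rank}(A_H+\mu I)=kr+q=d+1$, and ${\rm j}^\top(A_H+\mu I)^\#{\rm j}=kv+q/\mu$, where $v={\rm j}^\top(A_G+\mu I)^\#{\rm j}$. I would then choose $\beta\in[-1,0)$ and $\alpha=\beta+(1-\beta)/\mu$ so that $\frac{\alpha-\beta}{-\beta}=kv+q/\mu$; a direct calculation yields $\beta=\frac{1}{1-k\mu v-q}$, which lies in $[-1,0)$ exactly when $k\mu v+q\ge 2$. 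This places $H$ in the equality subcase of Theorem \ref{thm3.1}(2), so the converse part of Theorem \ref{thm3.1} produces a spherical $\{\alpha,\beta\}$-code $S$ with $\Gamma_\alpha(S)=H$ and rank $\mathrm{rank}(A_H+\mu I)-1=d$; hence $S\subseteq\mathbb{R}^d$ and $|S|=|V(H)|=k|V(G)|+q$.

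The main obstacle is verifying $k\mu v+q\ge 2$. The Cauchy--Schwarz step used in the theorem preceding Corollary \ref{cor3.2} requires only $A_G+\mu I\succeq 0$ and ${\rm j}\in R(A_G+\mu I)$, so it applies to every $G\in\mathcal{G}_{\mu,d+1}$ and gives $t\le\mu v$ for the independence number $t$ of $G$; in particular $\mu v\ge 1$. The cases $q\ge 1$ and $q=0$ with $k\ge 2$ then follow trivially. The delicate remaining case is $q=0,k=1$, meaning $r=d+1$: the constraint $\mathrm{rank}(A_{K_m}+\mu I)=m$ together with $G\ne K_{d+1}$ rules out $G$ being any complete graph, so $t\ge 2$ and $\mu v\ge 2$. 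This is the step where the hypothesis $G\ne K_{d+1}$ is essential, and it is where I expect the analysis to require the most care.
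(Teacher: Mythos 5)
Your proposal is correct and takes essentially the same approach as the paper: the upper bound via decomposing $\Gamma_\alpha(S)$ into connected components and summing ranks, and the lower bound via the disjoint union $H=kG\cup(\text{rank-}q\text{ remainder})$ together with the independence-number bound of Corollary \ref{cor3.2} to guarantee $\mu\,{\rm j}^\top(A_H+\mu I)^\#{\rm j}\geq2$ and hence a valid choice of $\beta_0\in[-1,0)$. The only cosmetic difference is that you pad with $q$ isolated vertices where the paper uses $K_q$, and you spell out the case analysis (including the $q=0$, $k=1$ case where $G\neq K_{d+1}$ is needed) that the paper leaves implicit.
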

\begin{proof}
For any graph $G\in\mathcal{G}_{\mu,d+1}\setminus\{K_{d+1}\}$, let $H=kG\cup K_q$, where $k=\left\lfloor\frac{d+1}{{\rm rank}(A_G+\mu I)}\right\rfloor$, $q=d+1-{\rm rank}(A_G+\mu I)\left\lfloor\frac{d+1}{{\rm rank}(A_G+\mu I)}\right\rfloor$. Then $H$ has at least a pair of non-adjacent vertices. By Corollary 3.3, we have
\begin{eqnarray*}
\mu{\rm j}^\top(A_H+\mu I)^\#{\rm j}\geq2.
\end{eqnarray*}
Then there exist $-1\leq\beta_0<\alpha_0<1$ ($\beta_0<0$) such that $\frac{1-\beta_0}{\alpha_0-\beta_0}=\mu$ and ${\rm j}^\top(A_H+\mu I)^\#{\rm j}=\frac{\alpha_0-\beta_0}{-\beta_0}$. By Theorem \ref{thm3.1}, there exists a spherical $\{\alpha_0,\beta_0\}$-code $S$ in $\mathbb{R}^d$ such that $\Gamma_\alpha(S)=H$. Hence
\begin{eqnarray*}
\max_{\frac{1-\beta}{\alpha-\beta}=\mu}N_{\alpha,\beta}(d)\geq|V(H)|=k|V(G)|+q
\end{eqnarray*}
for any $G\in\mathcal{G}_{\mu,d+1}\setminus\{K_{d+1}\}$.

Let $S$ be any spherical $\{\alpha,\beta\}$-code in $\mathbb{R}^d$ such that $\frac{1-\beta}{\alpha-\beta}=\mu$. Suppose that the $\alpha$-graph $G$ of $S$ has $t$ connected components $G_1,\ldots,G_t$. Then
\begin{eqnarray*}
\frac{|V(G)|}{d+1}&\leq&\frac{|V(G)|}{{\rm rank}(A_G+\mu I)}=\frac{\sum_{i=1}^t|V(G_i)|}{\sum_{i=1}^t{\rm rank}(A_{G_i}+\mu I)}\leq\max_{1\leq i\leq t}\frac{|V(G_i)|}{{\rm rank}(A_{G_i}+\mu I)}\\
&\leq&\max_{H\in\mathcal{G}_{\mu,d+1}}\frac{|V(H)|}{{\rm rank}(A_H+\mu I)}.
\end{eqnarray*}
\end{proof}

\subsection{Bounds derived from local structures}

For $p>0$, $\mu>1$ and positive integer $r$, let $N(r,p,\mu)$ (resp. $N^*(r,p,\mu)$) denote the maximum number of vertices in a graph $G$ satisfying $\lambda(G)\geq-\mu$, ${\rm j}\in R(A+\mu I)$, ${\rm rank}(A+\mu I)\leq r$ and ${\rm j}^\top(A+\mu I)^\#{\rm j}<p$ (resp. ${\rm j}^\top(A+\mu I)^\#{\rm j}=p$). Clearly, $N(r,p,\mu)$ is an increasing function with respect to $r$ and $p$, and $N^*(r,p,\mu)$ is an increasing function with respect to $r$.

From Theorem \ref{thm3.1}, we have $N_{\alpha,\beta}(d)=\max\{N(d,\frac{\alpha-\beta}{-\beta},\frac{1-\beta}{\alpha-\beta}),N^*(d+1,\frac{\alpha-\beta}{-\beta},\frac{1-\beta}{\alpha-\beta})\}$. \begin{thm}
Suppose that $-1\leq\beta<\alpha<1$, $\beta<0$. Then
\begin{eqnarray*}
N_{\alpha,\beta}(d)&\leq&\frac{\alpha-\beta}{-\beta}\left(f(\alpha,\beta,d)+\frac{1-\beta}{\alpha-\beta}\right),\\
f(\alpha,\beta,d)&\leq&N_{\alpha_0,\beta_0}(d),
\end{eqnarray*}
where
\begin{eqnarray*}
f(\alpha,\beta,d)&=&\max\left\{N\left(d,\frac{\alpha-\beta}{\alpha^2-\beta},\frac{1-\beta}{\alpha-\beta}\right),N^*\left(d,\frac{\alpha-\beta}{\alpha^2-\beta},\frac{1-\beta}{\alpha-\beta}\right)\right\},\\
\alpha_0&=&\frac{\alpha}{1+\alpha},\beta_0=\frac{\beta-\alpha^2}{1-\alpha^2}.
\end{eqnarray*}
\end{thm}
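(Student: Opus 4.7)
The plan is to derive the first displayed inequality by combining the quadratic Cauchy--Schwarz bound $n^{2}\leq(2|E(G)|+n\mu){\rm j}^{\top}(A_{G}+\mu I)^{\#}{\rm j}$ proved earlier in Section~3 (applied with $H=G$) with the local-neighborhood bounds on $G_{u}$ (also proved in Section~3), via a handshake count; and to derive the second displayed inequality by recognizing the parameters hidden inside $f(\alpha,\beta,d)$ as those that Theorem~\ref{thm3.1} associates to spherical $\{\alpha_{0},\beta_{0}\}$-codes.

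Let $G$ be the associated $\alpha$-graph of a spherical $\{\alpha,\beta\}$-code of size $n=N_{\alpha,\beta}(d)$ in $\mathbb{R}^{d}$, and set $\mu=\frac{1-\beta}{\alpha-\beta}$, $p_{0}=\frac{\alpha-\beta}{\alpha^{2}-\beta}$. First, for each $u\in V(G)$ I would check that $G_{u}$ satisfies every hypothesis defining $f(\alpha,\beta,d)$: the rank bound ${\rm rank}(A_{G_{u}}+\mu I)\leq{\rm rank}(A_{G}+\mu I)-1\leq d$ and the estimate ${\rm j}^{\top}(A_{G_{u}}+\mu I)^{\#}{\rm j}\leq p_{0}$ come from the local-structure theorem; the eigenvalue bound $\lambda_{n}(G_{u})\geq-\mu$ is inherited because $A_{G_{u}}+\mu I$ is a principal submatrix of the PSD matrix $A_{G}+\mu I$; and ${\rm j}\in R(A_{G_{u}}+\mu I)$ follows from the block PSD argument used inside that same proof. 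Consequently $|V(G_{u})|\leq f(\alpha,\beta,d)$ for every vertex $u$.

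Applying the quadratic Cauchy--Schwarz bound with $H=G$ and using the handshake identity $2|E(G)|=\sum_{u}|V(G_{u})|$ then gives
\begin{eqnarray*}
n^{2}\leq\frac{\alpha-\beta}{-\beta}\bigl(2|E(G)|+n\mu\bigr)\leq\frac{\alpha-\beta}{-\beta}\bigl(n\,f(\alpha,\beta,d)+n\mu\bigr),
\end{eqnarray*}
and dividing by $n$ produces exactly the first displayed inequality. For the second inequality, a direct calculation from $\alpha_{0}=\frac{\alpha}{1+\alpha}$ and $\beta_{0}=\frac{\beta-\alpha^{2}}{1-\alpha^{2}}$ yields $\frac{1-\beta_{0}}{\alpha_{0}-\beta_{0}}=\mu$ and $\frac{\alpha_{0}-\beta_{0}}{-\beta_{0}}=p_{0}$, so Theorem~\ref{thm3.1} applied to $(\alpha_{0},\beta_{0})$ gives $N_{\alpha_{0},\beta_{0}}(d)=\max\{N(d,p_{0},\mu),N^{*}(d+1,p_{0},\mu)\}$, and the monotonicity $N^{*}(d,p_{0},\mu)\leq N^{*}(d+1,p_{0},\mu)$ recorded at the start of Section~5 yields $f(\alpha,\beta,d)\leq N_{\alpha_{0},\beta_{0}}(d)$.

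The main obstacle I anticipate is producing the sharp coefficient $\frac{\alpha-\beta}{-\beta}$: a direct cover of $V(G)$ by closed neighborhoods of a maximum independent set, combined with Corollary~\ref{cor3.2}, only gives the weaker coefficient $\frac{1-\beta}{-\beta}$. The point of routing the estimate through the quadratic bound is precisely that it absorbs $2|E(G)|$ into the right-hand side rather than $|V(G)|$, which is what trades $\frac{1-\beta}{-\beta}$ for $\frac{\alpha-\beta}{-\beta}$.
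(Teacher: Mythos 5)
Your proposal is correct and follows essentially the same route as the paper: the paper also applies the quadratic bound of Theorem 3.2 with $H=G$, controls $2|E(G)|$ by $n|V(G_u)|$ (choosing a maximum-degree vertex $u$ rather than averaging over all vertices via the handshake identity, an immaterial difference), invokes Theorem 3.5 to conclude $|V(G_u)|\leq f(\alpha,\beta,d)$, and finishes with the same computation showing $\frac{1-\beta_0}{\alpha_0-\beta_0}=\frac{1-\beta}{\alpha-\beta}$ and $\frac{\alpha_0-\beta_0}{-\beta_0}=\frac{\alpha-\beta}{\alpha^2-\beta}$ together with monotonicity of $N^*$ in its first argument. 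No gaps relative to the paper's own proof.
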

\begin{proof}
Let $G$ be the $\alpha$-graph of a spherical $\{\alpha,\beta\}$-code in $\mathbb{R}^d$ such that $|V(G)|=N_{\alpha,\beta}(d)$, then 
\begin{eqnarray*}
{\rm rank}(A_G+\frac{1-\beta}{\alpha-\beta}I)\leq d+1. 
\end{eqnarray*}
Let $u$ be a vertex in $G$ with maximum degree. By Theorem 3.2, we get
\begin{eqnarray*}
N_{\alpha,\beta}(d)\leq\frac{\alpha-\beta}{-\beta}\left(|V(G_u)|+\frac{1-\beta}{\alpha-\beta}\right).
\end{eqnarray*}
Theorem 3.5 implies that $|V(G_u)|\leq f(\alpha,\beta,d)$.

Take $\alpha_0=\frac{\alpha}{1+\alpha},\beta_0=\frac{\beta-\alpha^2}{1-\alpha^2}$, then $\frac{\alpha_0-\beta_0}{-\beta_0}=\frac{\alpha-\beta}{\alpha^2-\beta}$ and $\frac{1-\beta_0}{\alpha_0-\beta_0}=\frac{1-\beta}{\alpha-\beta}$. So $f(\alpha,\beta,d)\leq N_{\alpha_0,\beta_0}(d)$.
\end{proof}

%\begin{rem}
%Moreover, if $\frac{1-\beta}{\alpha-\beta}=\mu$ for a fixed $\mu$, then for all sufficiently large $d>d(\mu)$, we have
%\begin{eqnarray*}
%N_{\alpha_0,\beta_0}(d)\leq\begin{cases}\frac{qk(\mu-1)d}{k(\mu-1)-1}+O_{\alpha_0,\beta_0}(1)~~~~~\mbox{if}~k(\mu-1)<\infty,\\
%qd+o(d)~~~~~~~~~~~\mbox{if}~k(\mu-1)=\infty,\end{cases}
%\end{eqnarray*}
%where $q=\max\{1,\frac{1}{2}\lfloor\frac{\alpha-\beta}{\alpha^2-\beta}\rfloor\}$.
%\end{rem}

%\begin{thm}
%Suppose that $s>0$, $-1\leq\beta<\alpha<1$, $\beta<0$ and $k\alpha\neq-1$ for any positive integer $k$. There exists a constant $c>0$ such that if $\frac{-\beta}{\alpha-\beta}d-\frac{1-\beta}{\alpha-\beta}$ is sufficiently large with respect to $c^{-1}$, then
%\begin{eqnarray*}
%N_{\alpha,\beta}(d)\leq\frac{\alpha-\beta}{-\beta}\left((\frac{1-\beta}{\alpha-\beta})^c(d-1)+\frac{1-\beta}{\alpha-\beta}\right).
%\end{eqnarray*}
%\end{thm}

\subsection{Spherical two-distance sets with small $\frac{\alpha-\beta}{-\beta}$}

\vspace{3mm}

When $\frac{\alpha-\beta}{-\beta}<1+\frac{1}{d-1}$, we have the following bound on $N_{\alpha,\beta}(d)$.
\begin{thm}
Suppose that $-1\leq\beta<\alpha<1$, $\beta<0$. If $\frac{\alpha-\beta}{-\beta}<1+\frac{1}{d-1}$, then
\begin{eqnarray*}
N_{\alpha,\beta}(d)\leq\max\left\{d+1,\left(\frac{-\alpha}{\alpha-\beta}+d^{-1}\right)^{-1}\frac{1-\beta}{\alpha-\beta}\right\}.
\end{eqnarray*}
\end{thm}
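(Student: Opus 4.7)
The plan is to analyze the $\alpha$-graph $G$ of a maximum-size spherical $\{\alpha,\beta\}$-code in $\mathbb{R}^d$, and to combine the clique bound implicit in Theorem~3.4 with a Tur\'an-type estimate and Theorem~3.2.

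Set $n=|V(G)|=N_{\alpha,\beta}(d)$ and $\mu=\frac{1-\beta}{\alpha-\beta}$. I would split on $n$: if $n\le d+1$ then the first term in the maximum already suffices, so assume $n\ge d+2$. Writing $r=\mathrm{rank}(S)\le d$, one has $n\ge d+2>r+1$, so in particular $G\ne K_{r+1}$; Theorem~3.4 then forces $G$ to be $K_{r+1}$-free, hence also $K_{d+1}$-free. Tur\'an's theorem therefore gives $2|E(G)|\le (1-\frac{1}{d})n^2$.

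Next I would apply Theorem~3.2 with $H=G$ (so $t=n$) and insert the Tur\'an bound,
$$n^2\le\frac{\alpha-\beta}{-\beta}\left(\left(1-\frac{1}{d}\right)n^2+n\mu\right),$$
then divide by $n$ and use $\frac{-\beta}{\alpha-\beta}-1=\frac{-\alpha}{\alpha-\beta}$ to arrive at
$$n\left(\frac{-\alpha}{\alpha-\beta}+\frac{1}{d}\right)\le\mu.$$
A one-line manipulation shows the hypothesis $\frac{\alpha-\beta}{-\beta}<1+\frac{1}{d-1}$ is equivalent to $-\alpha(d-1)-\beta>0$, which is in turn equivalent to $\frac{-\alpha}{\alpha-\beta}+\frac{1}{d}>0$. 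Dividing then yields the second bound in the maximum.

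The only substantive step is invoking Theorem~3.4 (via a trivial vertex count) to deduce $\omega(G)\le d$ so that Tur\'an's inequality can be applied; everything else is routine algebra. The smallness hypothesis on $\frac{\alpha-\beta}{-\beta}$ plays exactly the role of keeping the denominator $\frac{-\alpha}{\alpha-\beta}+\frac{1}{d}$ strictly positive, so that the final estimate is both valid and non-trivial.
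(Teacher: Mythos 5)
Your proposal is correct and follows essentially the same route as the paper: apply Theorem 3.2 with $H=G$, use Theorem 3.4 to get $K_{d+1}$-freeness, invoke Tur\'an's theorem, and rearrange using the positivity of $\frac{-\alpha}{\alpha-\beta}+d^{-1}$ guaranteed by the hypothesis. Your case split on $n\le d+1$ versus $n\ge d+2$ is in fact a slightly cleaner justification for invoking Theorem 3.4 (since it directly gives $G\neq K_{r+1}$ for the rank $r$ of the code) than the paper's brief "$G\neq K_{d+1}$" dichotomy.
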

\begin{proof}
Let $G$ be an $n$-vertex $\alpha$-graph of a spherical $\{\alpha,\beta\}$-code in $\mathbb{R}^d$ such that $n=N_{\alpha,\beta}(d)$. By, we obtain
\begin{eqnarray*}
n\leq\frac{\alpha-\beta}{-\beta}\left(\frac{2|E(G)|}{n}+\mu\right).
\end{eqnarray*}
If $G\neq K_{d+1}$, then by Theorem 3.4, we know that $G$ is $K_{d+1}$-free. By the Tur\'{a}n theorem, we have
\begin{eqnarray*}
n\leq\frac{\alpha-\beta}{-\beta}((1-d^{-1})n+\mu),\\
n\leq\left(\frac{-\alpha}{\alpha-\beta}+d^{-1}\right)^{-1}\mu.
\end{eqnarray*}
\end{proof}

\subsection{Spherical two-distance sets with large $\frac{1-\beta}{\alpha-\beta}$}

\vspace{3mm}

When $\frac{1-\beta}{\alpha-\beta}$ is large with respect to $d$, we have the following bound on $N_{\alpha,\beta}(d)$.
\begin{thm}
Suppose that $-1\leq\beta<\alpha<1$, $\beta<0$. If $\frac{1-\beta}{\alpha-\beta}>\sqrt{\lfloor\frac{d+2-k}{2}\rfloor\lceil\frac{d+2-k}{2}\rceil}$ for an integer $k$ ($0\leq k\leq d+1$), then
\begin{eqnarray*}
N_{\alpha,\beta}(d)\leq2^k(d+2-k)-1.
\end{eqnarray*}
\end{thm}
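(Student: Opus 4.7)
The plan is induction on $k$. Writing $\mu = (1-\beta)/(\alpha-\beta)$ and letting $M_\mu(d)$ denote the supremum of $N_{\alpha',\beta'}(d)$ over all admissible pairs $(\alpha',\beta')$ sharing this $\mu$ (with $\beta' < 0$), the goal is to show $M_\mu(d) \le 2^k(d+2-k)-1$ whenever $\mu > \sqrt{\lfloor(d+2-k)/2\rfloor\lceil(d+2-k)/2\rceil}$. The idea is to establish the recursion $M_\mu(d) \le 1 + 2M_\mu(d-1)$ (peeling off one vertex, which drops both $d$ and $k$ by one) together with the base case $M_\mu(e) \le e+1$ under the condition $\mu > \sqrt{\lfloor(e+2)/2\rfloor\lceil(e+2)/2\rceil}$.

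For the \textbf{inductive step}, I take any $u \in S$ and split $S\setminus\{u\}$ into $N^+(u) = \{v: v^\top u = \alpha\}$ and $N^-(u) = \{v: v^\top u = \beta\}$. Projecting each set onto $u^\perp \cong \mathbb{R}^{d-1}$ and rescaling to unit vectors yields spherical two-distance sets with inner-product pairs $(\alpha_1,\beta_1) = (\alpha/(1+\alpha),(\beta-\alpha^2)/(1-\alpha^2))$ for $N^+(u)$ and $(\alpha_2,\beta_2) = ((\alpha-\beta^2)/(1-\beta^2),\beta/(1+\beta))$ for $N^-(u)$. A direct computation gives $(1-\beta_i)/(\alpha_i-\beta_i) = \mu$ for $i=1,2$, so the hypothesis is preserved at parameters $(d-1, k-1)$ with exactly the same inequality on $\mu$. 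The inductive hypothesis then yields $|N^\pm(u)| \le 2^{k-1}(d+2-k)-1$, hence $|S| \le 1 + 2(2^{k-1}(d+2-k)-1) = 2^k(d+2-k)-1$. Edge cases are easy: $\beta = -1$ forces $|N^-(u)| \le 1$, and $\beta < 2\alpha^2-1$ forces the projected $N^+$ to be a one-distance code, each handled by the corresponding trivial bound.

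For the \textbf{base case $k=0$}, I suppose $n = N_{\alpha,\beta}(d) \ge d+2$ and derive a contradiction. By Theorem \ref{thm3.1}, $A_G + \mu I$ is positive semidefinite of rank $r \le d+1$, so factor $A_G + \mu I = V^\top V$ with $V = (v_1,\ldots,v_n)$, $v_i \in \mathbb{R}^r$. Since $n \ge d+2 \ge r+1$, the columns are linearly dependent; pick a null vector $w$ of $V$ whose support has minimum cardinality $s$. A standard minimal-dependence argument (any $s-1$ of the supporting columns would themselves be dependent if $s \ge r+2$, giving a shorter null vector) yields $s \le r+1 \le d+2$. Let $H$ be the induced subgraph of $G$ on $\mathrm{supp}(w)$; from $A_G w = -\mu w$ and $w_j = 0$ outside $\mathrm{supp}(w)$ one checks $A_H\,w|_{V(H)} = -\mu\,w|_{V(H)}$, with $w|_{V(H)}$ having full support. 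Restricting to any connected component $H_1$ of $H$ gives a nonzero eigenvector of $A_{H_1}$ with eigenvalue $-\mu$, and Lemma \ref{lem2.1} then forces $\mu \le \sqrt{\lfloor|V(H_1)|/2\rfloor\lceil|V(H_1)|/2\rceil} \le \sqrt{\lfloor s/2\rfloor\lceil s/2\rceil} \le \sqrt{\lfloor(d+2)/2\rfloor\lceil(d+2)/2\rceil}$, using monotonicity of $\lfloor x/2\rfloor\lceil x/2\rceil$. This contradicts the hypothesis, so $n \le d+1$. The boundary case $k = d+1$ requires no separate treatment since it yields the same numerical bound as $k = d$ under the automatically-satisfied condition $\mu > 1$.

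The \textbf{main obstacle} is the base case, specifically converting the rank bound of Theorem \ref{thm3.1} into a combinatorial object to which Lemma \ref{lem2.1} applies. The bridge is the PSD factorisation $A_G + \mu I = V^\top V$, which identifies null vectors of $A_G + \mu I$ with linear dependences among the columns of $V$; a minimum-support dependence has size at most $\mathrm{rank}(V)+1$, and its support induces a graph whose connected components each carry $-\mu$ as an eigenvalue—exactly what Lemma \ref{lem2.1} needs. Once this is in hand, the inductive step reduces to the straightforward algebraic identity $\mu_1 = \mu_2 = \mu$ for the two projected parameters.
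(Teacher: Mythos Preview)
Your argument is correct and structurally parallel to the paper's: both proofs perform a depth-$k$ binary decomposition by splitting at a vertex into its neighbourhood and non-neighbourhood, and both finish with Lemma~\ref{lem2.1}. The difference is in how the ``dimension drops by one'' step is justified. The paper stays entirely in graph-land: it works with the $\alpha$-graph $G$ throughout, invokes Theorem~3.5 to show that $\operatorname{rank}(A_{G_u}+\mu I)$ and $\operatorname{rank}(A_{G-N_G[u]}+\mu I)$ are each at most $\operatorname{rank}(A_G+\mu I)-1$, and then at depth $k$ argues that any leaf with more than $d+1-k$ vertices must carry $-\mu$ as its least eigenvalue. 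You instead return to the geometry at every step, projecting $N^\pm(u)$ onto $u^\perp$ and verifying the algebraic identity $(1-\beta_i)/(\alpha_i-\beta_i)=\mu$; this is effectively a direct reproof of the rank-reduction half of Theorem~3.5, so you avoid citing it. Your base case is in fact handled more carefully than the paper's: the minimum-support null vector automatically gives a \emph{connected} induced subgraph on at most $d+2$ vertices (a shorter null vector would otherwise exist), which is exactly what Lemma~\ref{lem2.1} requires, whereas the paper applies Lemma~\ref{lem2.1} to $G_{k,j}$ without addressing connectivity or why the bound comes out as $d+2-k$ rather than $|V(G_{k,j})|$.

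One small omission in your inductive step: you list the degeneracies $\beta=-1$ and $\beta<2\alpha^2-1$, but the analogous issue for $N^-(u)$ when $-1<\beta<-\tfrac12$ (which makes $\beta_2=\beta/(1+\beta)<-1$) is not mentioned. It is handled in the same way---no two vectors in $N^-(u)$ can then have inner product $\beta$, so the projection is a one-distance set in $\mathbb{R}^{d-1}$ of size at most $d\le 2^{k-1}(d+2-k)-1$---but it should be stated for completeness.
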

\begin{proof}
Let $G$ be the $\alpha$-graph of a spherical $\{\alpha,\beta\}$-code in $\mathbb{R}^d$ such that $|V(G)|=N_{\alpha,\beta}(d)$, then
\begin{eqnarray*}
{\rm rank}(A_G+\frac{1-\beta}{\alpha-\beta}I)\leq d+1.
\end{eqnarray*}
Let $G_{0,0}=G$, and let $G_{i,j}$ ($0\leq i\leq k,0\leq j\leq2^i-1$) be a family of graphs such that $G_{i+1,2j+1}$ is the subgraph of $G_{i,j}$ induced by $N_{G_{i,j}}(u_{ij})$ and $G_{i+1,2j+2}$ is the subgraph of $G_{i,j}$ induced by $V(G_{i,j})\setminus N_{G_{i,j}}[u_{ij}]$, where $u_{ij}$ is a vertex of $G_{i,j}$. Then $|V(G_{i,j})|=1+|V(G_{i+1,2j+1})|+|V(G_{i+1,2j+2})|$ and
\begin{eqnarray*}
|V(G)|=\sum_{j=0}^{2^k-1}|V(G_{k,j})|+\sum_{s=0}^{k-1}2^s=2^k-1+\sum_{j=0}^{2^k-1}|V(G_{k,j})|.
\end{eqnarray*}
By Theorem 3.5, we have
\begin{eqnarray*}
{\rm rank}(A_{G_{k,j}}+\frac{1-\beta}{\alpha-\beta}I)\leq{\rm rank}(A_G+\frac{1-\beta}{\alpha-\beta}I)-k\leq d+1-k
\end{eqnarray*}
for $0\leq j\leq2^k-1$. 

If $|V(G_{k,j})|>d+1-k$, then $-\frac{1-\beta}{\alpha-\beta}$ is the smallest eigenvalue of $G_{k,j}$. By Lemma \ref{lem2.1}, we get $\frac{1-\beta}{\alpha-\beta}\leq\sqrt{\lfloor\frac{d+2-k}{2}\rfloor\lceil\frac{d+2-k}{2}\rceil}$, a contradiction. So $|V(G_{k,j})|\leq d+1-k$ for $0\leq j\leq2^k-1$. Hence
\begin{eqnarray*}
|V(G)|=2^k-1+\sum_{j=0}^{2^k-1}|V(G_{k,j})|\leq2^k(d+1-k)+2^k-1.
\end{eqnarray*}
\end{proof}

%\begin{thm}
%Suppose that $-1\leq\beta<\alpha<1$, $\beta<0$. If $\frac{1-\beta}{\alpha-\beta}>\sqrt{\lfloor\frac{d+2-k}{2}\rfloor\lceil\frac{d+2-k}{2}\rceil}$ for a positive integer $k$, then
%\begin{eqnarray*}
%N_{\alpha,\beta}(d)\leq(d+1-k)\prod_{i=0}^{k-1}(1+b_i^{-1})+\frac{1-\beta}{\alpha-\beta}(p_0+p_0p_1+p_0\cdots p_{k-1}),
%\end{eqnarray*}
%where $b_k=\frac{-\beta}{\alpha}+\frac{k(\alpha-\beta)}{1-\alpha}$.
%\end{thm}

%\vspace{3mm}
%\noindent
%\textbf{Acknowledgements}

%\vspace{3mm}

%This work is supported by the National Natural Science Foundation of China (No. 12071097) and the Natural Science Foundation for The Excellent Youth Scholars of the Heilongjiang Province (No. YQ2022A002).

\end{CJK*}
\end{spacing}
\end{document}